\date{\today}
\def\w{\wedge}
\def\dbar{\bar\partial}
\def\R{{\mathbb R}}
\def\C{{\mathbb C}}
\def\P{{\mathcal P}}
\def\B{{\mathcal B}}
\def\D{{d}}
\def\Hom{{\rm Hom\, }}
\def\Im{{\rm Im\, }}
\def\O{{\mathcal O}}
\def\Re{{\rm Re\,  }}
\def\U{{\mathcal U}}
\def\codim{\text{codim}\,}
\def\ann{\text{ann}\,}
\def\supp{\text{supp}\,}
\def\1{\mathbf 1}
\def\J{{\mathcal J}}
\def\be{\begin{equation}}
\def\ee{\end{equation}}
\def\J{{\mathcal J}}
\def\q{{\mathfrak q}}
\def\p{{\mathfrak p}}
\def\r{{\mathfrak r}}
\def\ass{{\text Ass}}
\def\ch{\mathcal{CH}}
\def\PM{\mathcal{PM}}
\newtheorem{thm}{Theorem}[section]
\newtheorem{lma}[thm]{Lemma}
\newtheorem{cor}[thm]{Corollary}
\newtheorem{prop}[thm]{Proposition}
\theoremstyle{definition}
\theoremstyle{remark}
\newtheorem{preremark}{Remark}
\newtheorem{preex}{Example}
\newenvironment{remark}{\begin{preremark}}{\qed\end{preremark}}
\newenvironment{ex}{\begin{preex}}{\qed\end{preex}}
\numberwithin{equation}{section}
\begin{document}

\title[Decomposition of residue currents]{Decomposition of residue currents}

\date{\today}

\author{Mats Andersson \& Elizabeth Wulcan}

\address{Mathematical Sciences, Chalmers University of Technology and Mathematical Sciences, University of Gothenburg \\S-412 96 G\"OTEBORG\\SWEDEN}

\email{matsa@chalmers.se,   wulcan@chalmers.se}

\subjclass{32A27; 32C30}

\keywords{}

\begin{abstract}
Given a submodule $J\subset \O_0^{\oplus r}$ and a free resolution of $J$ one can define a certain vector-valued residue current whose annihilator is $J$. We make a decomposition of the current with respect to $\ass J$ that corresponds to a primary decomposition of $J$. As a tool we introduce a class of currents that includes usual residue and principal value currents; in particular these currents admit a certain type of restriction to analytic varieties and more generally to constructible sets.
\end{abstract}


\maketitle


\section{Introduction}\label{intro}
Let $(f_1,\ldots,f_q)$ be a holomorphic mapping at $0\in\C^n$ that forms a complete intersection, that is, the codimension of the common zero set $V^f=\{f_1=\cdots=f_q=0\}$ is equal to $q$. The Coleff-Herrera product
\begin{equation}\label{elfenben}
\mu^f=\dbar\frac{1}{f_1}\w\ldots\w\dbar\frac{1}{f_q},
\end{equation}
introduced in ~\cite{CH}, is a $\dbar$-closed $(0,q)$-current with support on $V^f$ such that $\bar \varphi \mu^f=0$ for all holomorphic $\varphi$ that vanish on $V^f$. It has turned out to be a good notion of a multivariate residue of $f$. The duality theorem, ~\cite{DS} and ~\cite{P2}, asserts that a holomorphic function $\varphi$ belongs to the ideal $J=(f_1,\ldots, f_q)$ in $\O_0$ if and only if the current $\varphi\mu^f$ vanishes, in other words the annihilator ideal $\ann\mu^f$ equals $J$.

Furthermore, $\mu^f$ has the so-called standard extension property, SEP, which basically means that $\mu^f$ has no ``mass'' concentrated on subvarieties of $V^f$ of codimension $>q$, or equivalently, its \emph{restriction}, in a sense that will be defined below, to each subvariety vanishes. Due to the SEP, $\mu^f$ can be decomposed in a natural way with respect to the irreducible components $V_j$ of $V^f$: $\mu^f=\sum_j \mu_j$, where $\mu_j$ is a current that has the SEP and whose support is contained in $V_j$; $\mu_j$ should be thought of as the restriction of $\mu^f$ to $V_j$.  
Moreover
\begin{equation}\label{eterisk}
J=\ann \mu^f=\cap_j \ann \mu_j. 
\end{equation}
From Proposition ~\ref{annanna} it follows that $\ann \mu_j$ is an $I_{V_j}$-primary ideal, where $I_{V_j}$ denotes the ideal associated with $V_j$, and hence \eqref{eterisk} gives a minimal primary decomposition of $J$. 
For a reference on primary decomposition see ~\cite{AM}. 
It is natural to consider the current $\mu^f$ as a geometric object and then $\mu^f=\sum_j \mu_j$ is a geometric decomposition of $\mu^f$.

\smallskip
In ~\cite{AW} we introduced, given a general ideal $J\subset\O_0$ a vector-valued residue current $R$ such that $\ann R=J$. The construction of $R$ is based on a free resolution of $J$ and it also involves a choice of Hermitian metrics on associated vector bundles (see Section ~\ref{stratifikation}). In case $J$ is defined by a complete intersection $f$, then $R$ is just the Coleff-Herrera product $\mu^f$. By means of the currents $R$ we were able to extend several results previously known for complete intersections. Combined with the framework of integral formulas developed in ~\cite{A4} we obtained explicit division formulas realizing the ideal membership, which were used to give for example a residue version of the Ehrenpreis-Palamodov fundamental principle, ~\cite{Ehr} and ~\cite{Pal}, generalizing ~\cite{BP}. 

\smallskip
In this paper we prove that the current $R$ can be decomposed as $R=\sum_\p R^\p$, where $\p$ runs over all associated prime ideals of $J$, so that $R^\p$ has support on $V(\p)$ and has the SEP. It is easy to see that this decomposition must be unique. Moreover it turns out that $\ann R^\p$ is $\p$-primary and that
\[J=\bigcap_\p \ann R^\p\] 
provides a minimal primary decomposition of $J$; our main result is Theorem ~\ref{prima}, which in fact holds also for submodules $J\subset \O_0^{\oplus r}$. 

As long as $J$ has no embedded primes the current $R^\p$ is just $R$ restricted to $V(\p)$ as for a complete intersection above, whereas the definition of $R^\p$ in general gets more involved. As a basic tool we introduce a class of currents that we call pseudomeromorphic and that admit restrictions to subvarieties and more generally to constructible sets. 
All currents in this paper are pseudomeromorphic and the definition is modeled on the currents that appear in various works as ~\cite{A}, ~\cite{AW} and ~\cite{PTY}; the typical example being the Coleff-Herrera product. This class has other desirable properties as well. It is closed under $\dbar$ and multiplication with smooth forms. If $T$ is pseudomeromorphic and has support on the variety $V$, then $T$ is annihilated by $\bar I_V$ and $\dbar \bar I_V$. In particular, (a version of) the SEP follows: if $T$ is of bidegree $(p,q)$ and $q<\codim V$, then $T$ vanishes. 
The relation $\varphi T=0$ is an intrinsic way of expressing that the result of the action of a list of differential operators applied to $\varphi$ vanishes on (certain subsets of) $V$. 
The fact that $\bar I_{V} T=0$ means that only holomorphic derivatives are involved. In case $T$ is $\dbar$-closed this can be made quite explicit, see ~\cite[Section 6]{Bj}.

\smallskip
In Section ~\ref{normaltyp} we define pseudomeromorphic currents, whereas restrictions to constructible sets are discussed in Section ~\ref{delsjon}. Section ~\ref{havet} deals with annihilators of pseudomeromorphic currents. Our main result, the decomposition of $R$, is presented in Section ~\ref{stratifikation} and a corresponding result in the algebraic case is given in Section ~\ref{algebraiska}. As an application we get a decomposition of the representation in our version of the fundamental principle.

\section{Pseudomeromorphic currents}\label{normaltyp}
Let $X$ be an $n$-dimensional complex manifold. Recall that the principal value current $\left[1/\sigma^a\right]$, $a$ positive integer, is well-defined in $\C_\sigma$, and that $\dbar\left[1/\sigma^a\right]$ is annihilated by $\bar\sigma$ and $d\bar\sigma$. In $\C^n_\sigma$, therefore, the current
\begin{equation}\label{principe}
\tau=\dbar \left [\frac{1}{\sigma_{i_1}^{a_{i_1}}}\right ]
 \wedge \ldots \wedge \dbar\left [\frac{1}{\sigma_{i_q}^{a_{i_q}}}\right ] \wedge 
\left [\frac{1}{\sigma_{i_{q+1}}^{a_{i_{q+1}}}}\right ]\cdots
\left [\frac{1}{\sigma_{i_\nu}^{a_{i_\nu}}}\right ] \alpha,
\end{equation}
where $\{i_1,\ldots,i_\nu\}\subset\{1,\ldots,n\}$, $a_k>0$, and $\alpha$ is a smooth form, is well-defined. 
If $\tau$ is a current on $X$, and there exists a local chart $\U_\sigma$ such that $\tau$ is of the form \eqref{principe} and $\alpha$ has compact support in $\U_\sigma$ we say that $\tau$ is \emph{elementary}. Note in particular that this definition, with $q$ equal to $0$, includes principal value currents as well as smooth forms.

A current $T$ on $X$ is said to be \emph{pseudomeromorphic} if it can be written as a locally finite sum 
\begin{equation}\label{guds}
T=\sum \Pi_* \tau_\ell,
\end{equation}
where each $\tau_\ell$ is a an elementary current on some manifold $\widetilde X_r$ and $\Pi=\Pi_1\circ\cdots\circ \Pi_r$ is a corresponding composition of resolutions of singularities $\Pi_1: \widetilde X_1\to X_1\subset X, \ldots, \Pi_r: \widetilde X_r\to X_r\subset \widetilde X_{r-1}$. We denote the class of pseudomeromorphic currents on $X$ by $\PM(X)$ and $\PM^{p,q}(X)$ denotes the elements that have bidegree $(p,q)$. 
Clearly the pseudomeromorphic currents is a subsheaf $\PM$ of the sheaf of all currents.

The Coleff-Herrera product \eqref{elfenben} and the more general products introduced in ~\cite{P} are typical examples of pseudomeromorphic currents. From the proof of Theorem 1.1 in ~\cite{PTY} and Theorem 1.1 in ~\cite{A} it follows that residue currents of Bochner-Martinelli type are pseudomeromorphic, and the arguments in Section ~2 in ~\cite{AW} shows that the residue currents introduced there are pseudomeromorphic.

Note that if $\tau$ is an elementary current, then $\dbar\tau$ is a sum of elementary currents and since $\dbar$ commutes with push-forwards it follows that $\PM$ is closed under $\dbar$. In the same way $\PM$ is closed under $\partial$. Moreover if $T$ is given by \eqref{guds} and $\beta$ is a smooth form, then 
$\beta\wedge T=\sum \Pi_*(\Pi^*\beta\wedge \tau_\ell)$, and thus $\PM$ is closed under multiplication with smooth forms. Furthermore $\PM$ admits a multiplication from the left with meromorphic currents: 

\begin{prop}\label{meromorf}
Let $T\in \PM$ and let $g$ be a holomorphic function. Then the analytic continuations 
\[
\left[\frac{1}{g}\right ] T:=\frac{|g|^{2\lambda}}{g}T\bigg |_{\lambda=0}
\quad 
\text{ and }
\quad
\dbar \left [\frac{1}{g}\right ] \wedge T:=\frac{\dbar|g|^{2\lambda}}{g}\wedge T\bigg |_{\lambda=0}
\]
exist and are pseudomeromorphic currents. The support of the second one is contained in $\{g=0\}\cap \supp T$. 
Moreover the products satisfy Leibniz' rule: 
\begin{equation}\label{regndag}
\dbar \left (
\left[\frac{1}{g}\right ] T \right )= 
\dbar \left[\frac{1}{g}\right ] \wedge T + \left[\frac{1}{g}\right ] \dbar T, 
\quad
\dbar \left (
\dbar \left[\frac{1}{g}\right ] \wedge T \right )= 
- \dbar \left[\frac{1}{g}\right ] \wedge \dbar T. 
\end{equation}
\end{prop}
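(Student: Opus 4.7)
The plan is to reduce, by exploiting the structural definition of pseudomeromorphic currents and a further resolution of singularities, to an explicit computation with monomial principal value and residue currents in local coordinates.

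\textbf{Step 1: reduction to the elementary case.} Since both assertions are local and $T$ is a locally finite sum of pushforwards $\Pi_*\tau_\ell$ of elementary currents, I would first reduce to $T = \Pi_*\tau$ with $\tau$ elementary on some resolution $\widetilde X_r$. For $\Re\lambda\gg 0$ the function $|g|^{2\lambda}/g$ is locally integrable, so the projection formula gives
\[
\frac{|g|^{2\lambda}}{g}\,\Pi_*\tau = \Pi_*\!\left(\frac{|\Pi^*g|^{2\lambda}}{\Pi^*g}\,\tau\right),
\]
and analogously for the $\dbar$-version. Since $\Pi_*$ is continuous and commutes with analytic continuation in $\lambda$, the proposition for $T$ follows from the corresponding statement for $\tau$ with $g$ replaced by $\tilde g := \Pi^*g$.

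\textbf{Step 2: monomialization.} Next I would apply Hironaka to choose a smooth proper modification $\Pi': Y \to \widetilde X_r$ such that $\Pi'^*\tilde g$ and each $\sigma_{i_k}$ appearing in $\tau$ become monomials times nonvanishing units in local coordinates $w$ on $Y$, and such that $\tau = \Pi'_*\tilde\tau$ for a finite sum $\tilde\tau$ of elementary currents whose monomial factors are pure powers of the $w_j$. After another application of the projection formula, this reduces the problem to studying
\[
\frac{|w^\alpha u|^{2\lambda}}{w^\alpha u}\,\tilde\tau\quad\text{and}\quad \frac{\dbar|w^\alpha u|^{2\lambda}}{w^\alpha u}\wedge \tilde\tau
\]
at $\lambda=0$, where $u$ is a holomorphic unit.

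\textbf{Step 3: one-variable continuations.} Since $|u|^{2\lambda}$ is smooth in $(w,\lambda)$ with value $1$ at $\lambda=0$, and $|w^\alpha|^{2\lambda}=\prod_j|w_j|^{2\alpha_j\lambda}$ factorizes, the analytic continuations reduce to the standard one-variable identities
\[
\frac{|w_j|^{2\alpha_j\lambda}}{w_j^{\alpha_j}}\bigg|_{\lambda=0} = \left[\frac{1}{w_j^{\alpha_j}}\right],\qquad \frac{\dbar|w_j|^{2\alpha_j\lambda}}{w_j^{\alpha_j}}\bigg|_{\lambda=0} = \dbar\!\left[\frac{1}{w_j^{\alpha_j}}\right].
\]
Multiplying these into $\tilde\tau$ and combining exponents when $w_j$ already appears there yields in each case a finite sum of elementary currents, whose pushforward by $\Pi\circ\Pi'$ is the desired pseudomeromorphic current on $X$. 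For the second product the new $\dbar$-factor is supported on $\{w_j=0\}\subset\{\Pi'^*\tilde g=0\}$, so the support statement follows.

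\textbf{Step 4: Leibniz and main obstacle.} For $\Re\lambda\gg 0$ the identity
\[
\dbar\!\left(\frac{|g|^{2\lambda}}{g}T\right) = \frac{\dbar|g|^{2\lambda}}{g}\wedge T + \frac{|g|^{2\lambda}}{g}\,\dbar T
\]
is the ordinary distributional Leibniz rule, and both sides admit analytic continuation to $\lambda=0$ by Steps 1--3 applied to both $T$ and $\dbar T\in\PM$; hence the identity persists there. The second Leibniz identity follows by applying $\dbar$ once more and using $\dbar^2=0$. The principal difficulty in carrying out this plan will be Step 3: one has to verify that multiplying the elementary factors in $\tilde\tau$ by $[1/w_j^{\alpha_j}]$ or $\dbar[1/w_j^{\alpha_j}]$, while allowing $w_j$ to already occur in $\tilde\tau$ as a principal value or residue factor, really does produce a finite sum of elementary currents (with exponents combining additively under analytic continuation) rather than an ill-defined product.
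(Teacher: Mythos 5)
Your proposal follows essentially the same strategy as the paper: reduce to the elementary case via the definition of $\PM$ and the projection formula, monomialize $g$ and the coordinates via a further Hironaka resolution, and then invoke one-variable analytic continuations, finishing Leibniz by uniqueness of analytic continuation. The ``principal difficulty'' you flag in Step 3 is not actually a gap: the one-variable products $[1/\sigma^a][1/\sigma^b]$, $[1/\sigma^a]\,\dbar[1/\sigma^b]$, and $\dbar[1/\sigma^a]\wedge[1/\sigma^b]$ are computed by integration by parts in Example~\ref{enkelprodukt}, immediately preceding the proof --- they give $[1/\sigma^{a+b}]$, $0$, and $\dbar[1/\sigma^{a+b}]$ respectively, so the outcome is always a finite sum of elementary currents and the argument closes.
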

By the first statement in the proposition we mean that the currents $(|g|^{2\lambda}/g)T$ and $(\dbar|g|^{2\lambda}/g)\wedge T$, which are clearly well defined if $\Re\lambda$ is large enough, have analytic continuations to $\Re\lambda >-\epsilon$ for some $\epsilon>0$, and $(|g|^{2\lambda}/g)T |_{\lambda=0}$ and $(\dbar|g|^{2\lambda}/g)\wedge T|_{\lambda=0}$ denote the values at $\lambda=0$. 

\begin{ex}\label{enkelprodukt}
In $\C$ the analytic continuations of $(|\sigma^a|^{2\lambda}/\sigma^a)\left[1 /\sigma^b\right]$, \linebreak $(|\sigma^a|^{2\lambda}/\sigma^a)\dbar \left[1 /\sigma^b\right]$ and $\dbar (|\sigma^a|^{2\lambda}/\sigma^a)\left[1 /\sigma^b\right]$ to $\Re\lambda > -\epsilon$ exist, which for instance can be seen by integration by parts, and we have
\begin{eqnarray*}
 \left[\frac{1}{\sigma^{a}}\right]\left[\frac{1}{\sigma^{b}}\right] &= &
|\sigma^a|^{2\lambda}\frac{1}{\sigma^a}\left[\frac{1}{\sigma^b}\right]\bigg |_{\lambda=0}=\left[\frac{1}{\sigma^{a+b}}\right]\\
 \left[\frac{1}{\sigma^{a}}\right]\dbar \left[\frac{1}{\sigma^{b}}\right] &=& 
|\sigma^a|^{2\lambda}\frac{1}{\sigma^a}\dbar\left[\frac{1}{\sigma^b}\right]\bigg|_{\lambda=0}=0\\
 \left (\dbar\left[\frac{1}{\sigma^{a}}\right]\right)\left[\frac{1}{\sigma^{b}}\right] &=&
\dbar |\sigma^a|^{2\lambda}\frac{1}{\sigma^a}\left[\frac{1}{\sigma^b}\right]\bigg|_{\lambda=0}=\dbar \left[\frac{1}{\sigma^{a+b}}\right].
\end{eqnarray*}
In particular it follows that the products with meromorphic currents in general are not (anti-)commutative. 
\end{ex}

\begin{proof}
Note that if $T$ is an elementary current and $g$ is a monomial, then, in light of Example ~\ref{enkelprodukt}, the analytic continuations exist and the values at $\lambda=0$ are elementary.

For the general case, assume that $T$ is of the form \eqref{guds}.
Locally, due to Hironaka's theorem on resolution of singularities, see ~\cite{BGVY}, for each $\ell$, in $\widetilde X_r$ we can find a resolution $\Pi^{r+1}: \widetilde X_{r+1}\to X_{r+1}\subset \widetilde X$ such that for each $k$, $(\Pi^{r+1})^* \sigma_k$ is a monomial times a nonvanishing factor and moreover $(\Pi^{r+1})^*(\Pi^r)^*\cdots (\Pi^1)^* g$ is a monomial. Thus we may assume that $\Pi^*g$ is a monomial for each $\ell$. 
Now, since $(|g|^{2\lambda}/g)T=\sum\Pi_*((|\Pi^*g|^{2\lambda}/\Pi^*g)\tau_\ell)$, the analytic continuation to $\Re\lambda > - \epsilon$ exists and the value at $\lambda=0$ is in $\PM$. 

The existence of the analytic continuation of $\dbar(|g|^{2\lambda}/g)\wedge T$ follows analogously. If $g\neq 0$ the value at $\lambda=0$ is clearly zero and hence the support of $\dbar[1/g]\wedge T$ is contained in $\{g=0\}\cap \supp T$. 

The last statement \eqref{regndag} follows directly from the definition and the uniqueness of analytic continuation. 
\end{proof}

If $T\in\PM(X)$ and $V\subset X$ is an analytic subvariety, we shall now see that the restriction $T|_U$ of $T$ to the Zariski-open set $U= V^c$ has a natural (standard) extension to $X$, which we denote $\1_U T$ (or, for typographical reasons, sometimes $T \1_U$). The current $T-\1_U T$, which has support on $V$, is a kind of residue that we will call the restriction of $T$ to $V$ and denote by $\1_V T$.

\begin{prop}\label{oppna}
Let $T\in\PM(X)$, let $U\subset X$ be a Zariski-open set, and assume that there is a tuple $h$ of holomorphic functions such that $\{h=0\}=U^c$. Then the analytic continuation $\1_U T:=|h|^{2\lambda}T|_{\lambda=0}$ exists and is independent of the particular choice of $h$.
\end{prop}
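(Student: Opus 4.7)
The plan is to reduce, via the structural decomposition \eqref{guds} together with Hironaka's theorem on principalization of ideals, to a coordinate-by-coordinate one-variable computation. Write $T=\sum \Pi_*\tau_\ell$ as in \eqref{guds}; since push-forward is continuous it suffices to treat each $|\Pi^* h|^{2\lambda}\tau_\ell$ separately. By further refining $\Pi$ we may assume that the pullback ideal $(\Pi^* h_1,\ldots,\Pi^* h_m)$ is principal in the local coordinates $\sigma$ on $\widetilde X_r$, generated by a monomial $h_0$, so that $\Pi^* h_j = h_0\,a_j$ with $a_j$ holomorphic and $\sum_j |a_j|^2>0$. Then
\[
|\Pi^* h|^{2\lambda}=|h_0|^{2\lambda}\,u^\lambda,\qquad u:=\sum_j|a_j|^2>0,
\]
where $u$ is smooth and strictly positive, so $u^\lambda=e^{\lambda\log u}$ is smooth in $\sigma$ and entire in $\lambda$.

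The existence of the continuation of $|h_0|^{2\lambda}u^\lambda\tau_\ell$ then reduces to the one-variable situation: writing $h_0=\sigma_1^{b_1}\cdots\sigma_n^{b_n}$ we have $|h_0|^{2\lambda}=\prod_j|\sigma_j|^{2\lambda b_j}$, while $\tau_\ell$ is a tensor product (in the sense of \eqref{principe}) of one-variable factors in the $\sigma_j$. Hence the continuation can be performed one coordinate at a time, using the computations illustrated in Example~\ref{enkelprodukt}; the smooth factor $u^\lambda$ poses no difficulty. In particular the limit at $\lambda=0$ exists, and since elementary factors become elementary (or vanish) upon substituting $\lambda=0$, the resulting current is pseudomeromorphic.

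For independence we examine this limit coordinate by coordinate. For each $j$ with $b_j>0$: if the $\sigma_j$-factor of $\tau_\ell$ is a residue $\dbar[1/\sigma_j^{a_j}]$ it is killed by $|\sigma_j|^{2\lambda b_j}|_{\lambda=0}$, whereas a principal value or trivial factor is left unchanged; for $j$ with $b_j=0$ we have $|\sigma_j|^{2\lambda b_j}\equiv 1$. Thus $|h_0|^{2\lambda}u^\lambda\tau_\ell|_{\lambda=0}$ depends only on the set $\{j:b_j>0\}$, that is, only on the zero set of $h_0$, and neither on the positive exponents $b_j$ nor on the strictly positive smooth factor $u$. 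Given a second tuple $h'$ with $\{h'=0\}=U^c$, choosing a common refinement of the principalizing resolutions, which is the main technical step and is again supplied by Hironaka, yields monomials $h_0,h_0'$ on the same $\widetilde X_r$ with
\[
\{h_0=0\}=\Pi^{-1}\{h=0\}=\Pi^{-1}\{h'=0\}=\{h_0'=0\},
\]
so they involve exactly the same coordinates. The previous observation then gives $|h_0|^{2\lambda}u^\lambda\tau_\ell|_{\lambda=0}=|h_0'|^{2\lambda}(u')^\lambda\tau_\ell|_{\lambda=0}$, and summing and pushing forward concludes the argument.
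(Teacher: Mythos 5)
Your proof follows essentially the same route as the paper's: reduce to elementary currents via \eqref{guds}, further resolve so that the pullback of $h$ becomes a monomial times a nonvanishing tuple, then argue coordinate-by-coordinate using the one-variable computations of Example~\ref{enkelprodukt}, and conclude independence by noting that the value at $\lambda=0$ only records which coordinate functions divide the pulled-back monomial, i.e., only the zero set of $h$. The factorization $|\Pi^*h|^{2\lambda}=|h_0|^{2\lambda}u^\lambda$ with $u>0$ smooth is a cleaner way of expressing the "monomial times nonvanishing tuple" normal form, and your observation that $u^\lambda$ is harmless is correct.

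One small point to watch: after you further refine $\Pi$ to principalize $(\Pi^*h)$, the current $\tau_\ell$ must itself be rewritten on the new blow-up, and for it to remain a sum of elementary currents there you also need the pulled-back coordinates $\sigma_k$ (the ones in which $\tau_\ell$ was originally elementary) to become monomials times units on the refined space. The paper arranges this explicitly by taking an additional toric resolution alongside the principalizing one; in your write-up this step is implicit in "by further refining $\Pi$ we may assume...". It is worth making explicit that the refinement simultaneously normalizes both the $\sigma_k$ and $h$, since otherwise the phrase "$\tau_\ell$ is a tensor product of one-variable factors in the $\sigma_j$" would not be available on the refined manifold.
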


This gives a definition of $\1_UT$ for any Zariski-open set $U$ on any manifold. 

\begin{proof}
If $T$ is an elementary current \eqref{principe} and $h$ is a monomial the analytic continuation exists, compare to the proof of Proposition ~\ref{meromorf}, and it is easy to see that the value at $\lambda=0$ is $T$ if none of $\sigma_{i_1},\ldots,\sigma_{i_q}$ divide $h$ and zero otherwise. 

Assume that $T$ is of the form \eqref{guds}. Then, for each $\ell$ we can find resolutions of singularities $\Pi^{r+1}: \widetilde X_{r+1}\to X_{r+1}\subset \widetilde X_{r}$ and toric resolutions 
$\Pi^{r+2}: \widetilde X_{r+2}\to X_{r+2}\subset \widetilde X_{r+1}$ 
such that each $(\Pi^{r+2})^*(\Pi^{r+1})^* \sigma_k$ is a monomial times a nonvanishing factor and moreover \linebreak $(\Pi^{r+2})^*(\Pi^{r+1})^*(\Pi^r)^*\cdots (\Pi^1)^* h$ is a monomial $h^0$ times a nonvanishing tuple $h'$, see for example ~\cite{BGVY}. Thus in \eqref{guds} we may assume that each $\Pi^* h$ is a monomial times a nonvanishing tuple. Now, since $|h|^{2\lambda} T=\sum \Pi_* (\Pi^*|h|^{2\lambda} \tau_\ell)$, the analytic continuation to $\Re\lambda > -\epsilon$ exists. Moreover, 
\begin{equation}\label{gren}
|h|^{2\lambda} T|_{\lambda=0}=
\sum\Pi_*\tau_{\ell'},
\end{equation}
where the sum is taken over $\ell'$ such that 
none of the factors $\sigma_{i_1}, \ldots, \sigma_{i_q}$ in $\tau_{\ell'}$ divides $\Pi^* h$.
In particular it follows that $|h|^{2\lambda} T|_{\lambda=0}$ only depends on $U$ and not on the particular choice of $h$. Indeed, if $g$ is another tuple of functions such that $U^c=\{g=0\}$, we can find resolutions such that both $\Pi^* h$ and $\Pi^*g$ are monomials times nonvanishing tuples. Then clearly $\Pi^* h$ and $\Pi^*g$ must be divisible by the same coordinate functions.
\end{proof}

Let $T$ be a current on a variety $V$ of pure codimension $q$. Following Björk, see ~\cite{Bj} for background and a thorough discussion, we say that $T$ has the \emph{standard extension property (SEP) with respect to $V$} if the following holds: For each holomorphic $h$, not vanishing identically on any irreducible component of $V$, and each smooth approximand $\chi$ of the characteristic function of the interval $[1,\infty)$, 
\begin{equation}\label{limiten}
\lim_{\epsilon\to 0}\chi(|h|/\epsilon)T =T
\end{equation}
in the weak sense. As mentioned in the introduction the Coleff-Herrera product \eqref{elfenben} has the SEP with respect to $V^f$, see ~\cite{Bj}. Here the nontrivial case is when $\{h=0\}\supset V_{\text{sing}}$. 
In this case $\chi(|h|/\epsilon)\mu^f$ has meaning and \eqref{limiten} holds, even if $\chi$ is precisely equal to $\chi_{[1,\infty)}$.


If $T$ is a pseudomeromorphic current one can verify, see for example ~\cite{A3}, that $\lim_{\epsilon\to 0}\chi(|h|/\epsilon)T=|h|^{2\lambda} T$. We will take as a definition that $T\in\PM$ (with support on $V$) has the SEP with respect to $V$ if $\1_{V'} T=0$ for all varieties $V'\subset V$ of codimension $\geq q+1$. 

The main use in ~\cite{Bj} of the notion of SEP is in the definition of (the sheaf of) Coleff-Herrera currents. A $(*, q)$-current $T$ with support on $V$ is a \emph{Coleff-Herrera current on $V$}, $T\in \ch_V$, if it has the SEP with respect to $V$, is $\dbar$-closed, and is annihilated by $\bar I_V$.


\begin{prop}\label{antiholo}
Let $T\in\PM(X)$. Suppose that $\supp T$ is contained in the variety $Z$ and $\Psi$ is a holomorphic form that vanishes on $Z$. Then $\overline\Psi\wedge T=0$.
\end{prop}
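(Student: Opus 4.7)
The plan is to reduce the problem to a pointwise identity on elementary currents by refining the pseudomeromorphic representation of $T$. Writing $T=\sum_\ell \Pi_*\tau_\ell$ as in \eqref{guds} and pulling the smooth form $\bar\Psi$ under the pushforward gives
\[
\bar\Psi\wedge T=\sum_\ell \Pi_*\bigl(\overline{\Pi^*\Psi}\wedge\tau_\ell\bigr).
\]
The obstacle is that the individual $\tau_\ell$ need not satisfy $\supp\tau_\ell\subset\Pi^{-1}(Z)$; the hypothesis $\supp T\subset Z$ controls only the total sum, and cancellation between the $\Pi_*\tau_\ell$ may be essential.

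The crucial preliminary step is to arrange $\supp\tau_\ell\subset\Pi^{-1}(Z)$ for every surviving $\ell$. Let $h$ be a tuple of holomorphic functions defining $Z$. Since $\supp T\subset\{h=0\}$, for $\Re\lambda$ sufficiently large the smooth function $|h|^{2\lambda}$ vanishes to high enough order on $\supp T$ that $|h|^{2\lambda}T=0$; analytic continuation then yields $\1_{Z^c}T=|h|^{2\lambda}T|_{\lambda=0}=0$. Following the proof of Proposition~\ref{oppna}, I would pass to further resolutions so that each $\Pi^*h$ becomes a monomial times a nonvanishing tuple; then \eqref{gren} expresses $\1_{Z^c}T$ as the sum of $\Pi_*\tau_{\ell'}$ taken over those indices for which no residue coordinate $\sigma_{i_j}$ of $\tau_{\ell'}$ divides $\Pi^*h$. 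Since this sum is zero, $T$ equals the sum over the remaining indices $\ell$, and for each such $\ell$ some residue coordinate $\sigma_{i_j}$ divides $\Pi^*h$; consequently $\supp\tau_\ell\subset\{\sigma_{i_j}=0\}\subset\Pi^{-1}(Z)$.

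It then remains to verify that $\overline{\Pi^*\Psi}\wedge\tau_\ell=0$ for each such $\ell$. Any holomorphic form vanishing on the smooth coordinate subvariety $\{\sigma_{i_1}=\cdots=\sigma_{i_q}=0\}$ lies in the submodule generated by $\sigma_{i_j}$ and $d\sigma_{i_j}$ ($j=1,\ldots,q$), so we may write
\[
\Pi^*\Psi=\sum_{j=1}^q\bigl(\sigma_{i_j}\alpha_j+d\sigma_{i_j}\wedge\beta_j\bigr)
\]
for suitable holomorphic forms $\alpha_j,\beta_j$. Conjugating and wedging with $\tau_\ell$, every term contains either the product $\bar\sigma_{i_j}\,\dbar[1/\sigma_{i_j}^{a_{i_j}}]$, which vanishes by the classical one-variable identity $\bar\sigma\,\dbar[1/\sigma^a]=0$ (using that $\bar\sigma$ commutes with $\partial_\sigma$), or the product $d\bar\sigma_{i_j}\wedge\dbar[1/\sigma_{i_j}^{a_{i_j}}]$, which vanishes because $\dbar[1/\sigma_{i_j}^{a_{i_j}}]$ already carries a factor $d\bar\sigma_{i_j}$. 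Hence $\overline{\Pi^*\Psi}\wedge\tau_\ell=0$ and $\bar\Psi\wedge T=0$.

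The principal difficulty is the refinement step, which moves the support of each elementary piece into $\Pi^{-1}(Z)$ at the cost of discarding the ``good'' pieces that sum to zero; once this is achieved, the conclusion reduces to standard one-variable vanishing identities in the residue coordinates.
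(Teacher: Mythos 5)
Your proof is correct and follows essentially the same approach as the paper: both first use $\1_{Z^c}T=0$ (via the proof of Proposition~\ref{oppna}) to refine the representation so that every surviving elementary piece $\tau_{\ell}$ has support in $\Pi^{-1}(Z)$, then pull $\Psi$ back, decompose $\Pi^*\Psi$ with respect to the residue coordinates, and invoke the one-variable identities $\bar\sigma\,\dbar[1/\sigma^a]=0$ and $d\bar\sigma\wedge\dbar[1/\sigma^a]=0$. The paper dispatches the elementary step in one sentence, while you spell out the module decomposition of $\Pi^*\Psi$; otherwise the arguments coincide.
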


\begin{proof}
Note that if $T$ is an elementary current and $Z$ is a union of coordinate hyperplanes the result follows from the one-dimensional case. Indeed, each term of $\Psi$ then contains a factor $\sigma_k$ or $d\sigma_k$ for each $\sigma_k$ that vanishes on $Z$, and moreover $\bar \sigma$ as well as $d\bar \sigma$ annihilate $\dbar [1/\sigma^a]$. 

For the general case assume that $T$ is given by \eqref{guds}. Note that $T=\1_ZT $ since $\supp T\subset Z$. The crucial point is now that according to the proof of Proposition ~\ref{oppna} we have $T=\sum\Pi_*\tau_{\ell'}$, where $\tau_{\ell'}$ is an elementary current with support on $(\Pi^L)^{-1}(Z)$, and hence
\[
\overline\Psi\wedge T
=
\sum\Pi_*\left ((\Pi)^* \overline\Psi \wedge \tau_{\ell'} \right ).
\]
Now, since $\Psi$ vanishes on $Z$, the holomorphic form $(\Pi)^* \Psi$ vanishes on $(\Pi)^{-1} (Z)$, which however is a union of coordinate planes. Hence $(\Pi)^* \overline\Psi \wedge \tau_{\ell'}$ vanishes as noted above and we are done.
\end{proof}

In particular, Proposition ~\ref{antiholo} implies that $d\overline h \wedge T=0$ if $h$ is holomorphic and vanishes on $\supp T$. Arguing as in the proofs of Theorems ~III.2.10-11 on normal currents in ~\cite{Dem} we get the following.

\begin{cor}\label{vaga}
Let $T\in\PM^{p,q}(X)$. If $\supp T$ is contained in the analytic variety $V$ of codimension $>q$, then $T=0$.
\end{cor}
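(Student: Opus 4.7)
The plan is to argue by induction on $\dim V$, with the main step being a local computation at a smooth point of $V$. The key input is the remark immediately following Proposition \ref{antiholo}: for any holomorphic function $h$ vanishing on $\supp T$, we have $d\overline h \wedge T = 0$.

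For the inductive step I would fix a regular point $x \in V_{\mathrm{reg}}$ and choose local holomorphic coordinates $(z_1,\dots,z_n)$ near $x$ so that $V = \{z_1 = \cdots = z_k = 0\}$ locally, where $k = \codim V > q$. Since each $z_j$ for $j = 1,\dots,k$ is a holomorphic function vanishing on $V \supset \supp T$, the remark above yields $d\overline z_j \wedge T = 0$ for $j = 1,\dots,k$. Writing
\[
T = \sum_{|I|=p,\, |J|=q} T_{IJ}\, dz_I \wedge d\overline z_J
\]
with distributional coefficients, each relation $d\overline z_j \wedge T = 0$ forces $T_{IJ} = 0$ whenever $j \notin J$. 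A surviving coefficient therefore requires $\{1,\dots,k\} \subseteq J$, which is impossible since $|J| = q < k$. Hence $T = 0$ in a neighborhood of every point of $V_{\mathrm{reg}}$, and consequently $\supp T \subset V_{\mathrm{sing}}$.

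The singular locus $V_{\mathrm{sing}}$ is an analytic subvariety of $V$ of strictly smaller dimension, and its codimension in $X$ is at least $k+1 > q$. The inductive hypothesis, applied with $V$ replaced by $V_{\mathrm{sing}}$, then yields $T = 0$; the base case $V = \emptyset$ is trivial. (Equivalently, one can phrase this as a noetherian descent along $V \supsetneq V_{\mathrm{sing}} \supsetneq \cdots$, which terminates locally.)

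The only real obstacle is the coefficient-by-coefficient bookkeeping at a smooth point. Once the annihilation by antiholomorphic differentials $d\overline z_j$ is in hand, the content is a pure codimension-versus-bidegree count that breaks precisely when the hypothesis $q < \codim V$ is dropped. It extends to distributional coefficients because wedging with the smooth form $d\overline z_j$ is algebraic in the coefficients, so the vanishing of $d\overline z_j \wedge T$ as a current forces the claimed vanishing of each $T_{IJ}$ with $j \notin J$.
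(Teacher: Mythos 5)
Your proposal is correct and is essentially the argument the paper intends: the citation of Demailly's Theorems III.2.10--11 points exactly to the local degree count at a smooth point of $V$ via annihilation by $d\overline z_j$ (which here is supplied by Proposition~\ref{antiholo}), followed by descent along the singular stratification. You have simply written out the details that the paper delegates to the reference.
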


In other words, the corollary says that if $T\in\PM^{p,q}(X)$ has support on $V$ of codimension $q$, then $T$ has the SEP. Also, Proposition ~\ref{antiholo} implies that $T$ is annihilated by all anti-holomorphic functions that vanish on $V$. Thus, if in addition $\dbar T=0$, then by definition $T\in\ch_V$.

Conversely, if $T\in\ch_V$, then locally $T=\gamma \wedge R$, where $R$ is a residue current and $\gamma$ is a holomorphic $(0,q)$-form, see for example ~\cite{A3}, and so $T\in\PM$. Hence we conclude:

\begin{prop}
Suppose that $V$ is an analytic variety of pure codimension $q$. 
Then $\ch_V^{p,q}$ is precisely the set of currents in $\PM^{p,q}$ with support on $V$ that are $\dbar$-closed. 
\end{prop}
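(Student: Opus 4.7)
The plan is to prove the two inclusions separately, invoking the results that have just been assembled in this section.

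For the inclusion ``$\supset$'', I would start with $T\in\PM^{p,q}(X)$ having support on $V$ and satisfying $\dbar T=0$, and verify the three conditions in the definition of $\ch_V$. The $\dbar$-closedness is given. Annihilation by $\bar I_V$ is immediate from Proposition \ref{antiholo}: any antiholomorphic function vanishing on $V$ is the conjugate of a holomorphic form vanishing on $Z=V\supset \supp T$, so multiplication by it kills $T$. For the SEP, the key observation is that for any subvariety $V'\subset V$ of codimension $\geq q+1$, the restriction $\1_{V'}T$ is well-defined by Proposition \ref{oppna} and lies in $\PM^{p,q}(X)$ with support contained in $V'$. Since $\codim V'>q$, Corollary \ref{vaga} forces $\1_{V'}T=0$, which is exactly the SEP with respect to $V$ in the sense defined above.

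For the inclusion ``$\subset$'', I take $T\in\ch_V^{p,q}$. The support and $\dbar$-closedness conditions are built into the definition, so the only thing to check is that $T\in\PM$. Here I would simply quote the structural result referenced in the excerpt: locally on $X$ one has a representation $T=\gamma\wedge R$, where $R$ is a residue current (hence pseudomeromorphic, as already noted in Section \ref{normaltyp}) and $\gamma$ is a holomorphic $(0,q)$-form. Since $\PM$ is closed under multiplication by smooth forms, this exhibits $T$ as pseudomeromorphic.

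The main obstacle is really just conceptual rather than computational: the hardest bit is recognizing that Corollary \ref{vaga} yields the SEP essentially for free, once one notes that the restriction operator $\1_{V'}$ preserves pseudomeromorphicity and bidegree. Everything else is an assembly of the previously proved statements (Propositions \ref{meromorf}, \ref{oppna}, \ref{antiholo} and Corollary \ref{vaga}) together with the standard local structure theorem for Coleff--Herrera currents taken from \cite{A3}.
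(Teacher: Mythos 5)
Your proposal is correct and follows essentially the same route as the paper: the inclusion $\supset$ is obtained by combining Corollary \ref{vaga} (for the SEP, noting that the restriction $\1_{V'}T$ is pseudomeromorphic, bidegree-preserving and supported on $V'$) with Proposition \ref{antiholo} (for annihilation by $\bar I_V$), while the inclusion $\subset$ uses the local structure theorem $T=\gamma\wedge R$ from \cite{A3} together with the closedness of $\PM$ under multiplication by smooth forms. The paper's exposition is more compressed but the logical content is identical.
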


By iterated use of Proposition \ref{meromorf}, given functions $f_1\ldots, f_\nu$, we can form a ``product''
\begin{equation}\label{arm}
T=\dbar \left [ \frac{1}{f_1}\right ] \wedge \ldots \wedge \dbar \left [ \frac{1}{f_q}\right ]\wedge 
\left [ \frac{1}{f_{q+1}}\right ] \ldots \left [ \frac{1}{f_\nu}\right ]\alpha,
\end{equation}
where $\alpha$ is a smooth form. If the $f_i$ are powers of coordinate functions (and $\alpha$ has compact support) we just get back \eqref{principe}. In general \eqref{arm} depends on the order of the $f_i$, compare to Example \ref{enkelprodukt}; to illustrate the usefulness of Corollary \ref{vaga} let us sketch a proof of the following claim:

\emph{
If $f_1,\ldots f_\nu$ form a complete intersection, then \eqref{arm} satisfies all formal (anti-)commutativity rules, and moreover 
\begin{equation}
f_1 T =0 \text{ and } 
f_\nu T = 
\dbar \left [ \frac{1}{f_1}\right ] \wedge \ldots \wedge \dbar \left [ \frac{1}{f_q}\right ]\wedge 
\left [ \frac{1}{f_{q+1}}\right ] \ldots \left [ \frac{1}{f_{\nu-1}}\right ]\alpha.
\end{equation}
}

In the complete intersection case \eqref{arm} coincides with the analogous product in \cite{P}. In particular, if $\nu=q$ and $\alpha\equiv 1$, then \eqref{arm} is the Coleff-Herrera product \eqref{elfenben}; compare to \cite{A3}. 
\begin{remark}
If $f_1, \ldots, f_\nu$ are arbitrary holomorphic functions one can check that \eqref{arm} coincides with the limit when $\epsilon_j\to 0$ of 
\begin{equation*}
\frac{\dbar\chi(|f_1|/\epsilon_1)\wedge \ldots \wedge \dbar \chi(|f_q|/\epsilon_q)\wedge\chi(|f_{q+1}|/\epsilon_{q+1}) \ldots \chi (|f_\nu|/\epsilon_\nu) \alpha}{f_1\ldots f_\nu},
\end{equation*}
where $\chi$ is as in \eqref{limiten}, 
provided that $\epsilon_1 >> \epsilon_2 >> \ldots >> \epsilon_\nu$ in the sense of \cite{CH}. 
\end{remark}
To prove the claim, assume for simplicity that $q=\nu=2$ and that $f$ and $g$ form a complete intersection; the general case is handled analogously. It follows from the definition that $f\dbar[1/f]=0$. Hence
\begin{equation}\label{lupp}
f\dbar \left [ \frac{1}{f}\right ] \wedge \left [ \frac{1}{g}\right ]\alpha =0 
\end{equation}
outside the set $\{f=g=0\}$, which by assumption has codimension 2, and by Corollary \ref{vaga} it then follows that \eqref{lupp} holds everywhere. In the same way one checks that $g\dbar [1/f] \wedge [1/g] \alpha = \dbar [1/f] \wedge \alpha$ and $[1/g]\dbar [1/f] \wedge \alpha = \dbar [1/f] \wedge [1/g]\alpha$. The remaining parts of the claim follow by Leibniz' rule after applying $\dbar$.

\section{Restrictions of pseudomeromorphic currents}\label{delsjon}
We will now show that one can give meaning to restrictions of pseudomeromorphic currents to all constructible sets. Recall that the set of constructible sets in $X$, which we will denote by $\mathcal C(X)$, is the Boolean algebra generated by the Zariski-open sets in $X$.

\begin{thm}\label{zariski}
There exists a unique, linear in $\PM(X)$ and degree-preserving, 
mapping 
\begin{equation}\label{kraka}
\mathcal C(X) \times \PM(X) \to \PM(X): (W,T) \mapsto \1_W T
\end{equation}
such that $\1_U T$ coincides with the natural extension across $U^c$ of the restriction $T|_U$ of $T$ to $U$ if $U\subset X$ is Zariski-open and moreover for all $W$ and $W'$ in $\mathcal C(X)$,
\begin{enumerate}
\item[(i)]
$\1_{W^c} T=T- \1_W T$ 
\item[(ii)]
$\1_{W\cap W'} T =\1_W (\1_{W'} T)$.
\end{enumerate}
\end{thm}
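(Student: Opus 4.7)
The plan is to reduce the theorem to a single analytic input --- pairwise commutativity of the operators $\1_U$ for Zariski-open $U$ provided by Proposition ~\ref{oppna} --- and then let Boolean-algebra formalism handle the rest.

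First I would observe that conditions (i), (ii), combined with the prescribed action on Zariski-open sets, force the definition on all of $\mathcal C(X)$. Any constructible $W$ lies in a finite Boolean subalgebra of $\mathcal C(X)$ generated by Zariski-open sets $U_1,\ldots,U_N$, and hence can be written as a disjoint union of atoms
\[
W=\bigsqcup_\alpha U_1^{\epsilon^\alpha_1}\cap\cdots\cap U_N^{\epsilon^\alpha_N},\qquad \epsilon^\alpha_i\in\{+,-\},
\]
with $U^+=U$, $U^-=U^c$. Setting $P_i^+:=\1_{U_i}$ and $P_i^-:=I-\1_{U_i}$, conditions (i) and (ii) force
\[
\1_W T=\sum_\alpha P_1^{\epsilon^\alpha_1}\cdots P_N^{\epsilon^\alpha_N} T.
\]
Uniqueness is therefore immediate, and existence reduces to showing that this formula defines an element of $\PM(X)$ depending only on $W$.

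The key analytic step is the commutation lemma: $\1_U \1_V T = \1_V \1_U T$ for all $T\in\PM(X)$ and all Zariski-open $U,V\subset X$. Choosing tuples $h,h'$ with $\{h=0\}=U^c$ and $\{h'=0\}=V^c$, and writing $T=\sum\Pi_*\tau_\ell$ as in \eqref{guds}, I would iterate Hironaka's theorem as in the proof of Proposition ~\ref{oppna}, but now so that $\Pi^*h$ and $\Pi^*h'$ are \emph{simultaneously} monomials times nonvanishing tuples on a common resolution. This reduces the claim to the assertion that, for an elementary current $\tau$ and monomials $h,h'$,
\[
|h|^{2\lambda}|h'|^{2\mu}\tau\Big|_{\lambda=0,\,\mu=0}
\;=\;
|h'|^{2\mu}|h|^{2\lambda}\tau\Big|_{\mu=0,\,\lambda=0}.
\]
By the computations of Example ~\ref{enkelprodukt} both sides equal $\tau$ if none of the residue-factor coordinates $\sigma_{i_1},\ldots,\sigma_{i_q}$ of $\tau$ divides $h$ or $h'$, and vanish otherwise; the same argument yields idempotency $\1_U\1_U=\1_U$.

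With commutativity and idempotency established, the operators $\{P_i^\pm\}$ form a commuting family of idempotents, and the $2^N$ products $P_1^{\epsilon_1}\cdots P_N^{\epsilon_N}$ are pairwise orthogonal and sum to $I$. The formula above therefore defines $\1_W T\in\PM(X)$ (each $P_i^\pm$ preserves $\PM$ by Proposition ~\ref{oppna}), and independence of the generating family follows by refining any two representations to the common generating set: factoring $I=\prod_j(Q_j^++Q_j^-)$ into the first formula and using commutativity rewrites it as a sum over precisely those refined atoms lying in $W$, matching the second representation. Linearity and degree preservation are clear, and (i), (ii) reduce to Boolean identities on atoms: $\1_W+\1_{W^c}$ sums all $2^N$ atom operators and telescopes to $I$, while $\1_{W\cap W'}=\1_W\1_{W'}$ because both sides collect exactly the atoms contained in $W\cap W'$. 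The principal obstacle is the commutation lemma; it is the sole point where the analytic structure of $\PM(X)$ enters, and the delicate issue is the simultaneous resolution for $h$ and $h'$. Once that is secured, the remainder is routine Boolean bookkeeping for a commuting family of idempotents.
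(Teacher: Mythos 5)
Your approach is genuinely different from the paper's and is, in outline, valid. The paper defines $\1_W T$ inductively via an arbitrary ``representation'' of $W$ (a chain of complements and intersections starting from Zariski-closed sets) and then proves well-definedness by establishing the push-forward formula $\1_W T=\sum\Pi_*\langle\Pi^{-1}(W),\tau_\ell\rangle$, which is manifestly representation-independent; (i) and (ii) then drop out of the combinatorics of $\Omega(\cdot)\subset\P([n])$. You instead isolate a commutation lemma for the $\1_U$'s on Zariski-opens, establish it by the same simultaneous-Hironaka reduction to the elementary case, and then run the Boolean-algebra machinery of commuting idempotents. This is a clean and arguably more transparent packaging of the same analytic content, and the simultaneous monomialization step you flag as delicate is indeed available and is used in essentially the same way in the paper's proofs of Propositions~\ref{meromorf} and~\ref{oppna}.

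There is, however, a genuine gap in your last paragraph: the claim that, once commutativity and idempotency are in hand, the rest is ``routine Boolean bookkeeping for a commuting family of idempotents.'' For an abstract commuting family of idempotents this bookkeeping does \emph{not} suffice to prove that your atom formula is independent of the generating family. When you refine $\{U_i\}$ and $\{U_j'\}$ to a common family and insert $I=\prod_j(Q_j^++Q_j^-)$, the first expression becomes a sum over pairs $(\alpha,\beta)$ with $A_\alpha\subset W$, while the second becomes a sum over pairs with $B_\beta\subset W$; these index sets differ, and their symmetric difference consists precisely of pairs with $A_\alpha\cap B_\beta=\emptyset$. To conclude, you must know that the operator attached to an empty atom vanishes, i.e.\ that $U_1^{\epsilon_1}\cap\cdots\cap U_N^{\epsilon_N}=\emptyset$ forces $P_1^{\epsilon_1}\cdots P_N^{\epsilon_N}T=0$ (equivalently, the lattice-compatibility $U\subset U'\Rightarrow\1_U\1_{U'}=\1_U$). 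This is false for an abstract commuting family of idempotents and is an additional analytic input; it does follow from the same elementary analysis you use for commutativity (on a simultaneous resolution, the residue-factor index set of $\tau$ would have to lie in $\Omega(\Pi^{-1}(\emptyset))=\emptyset$), but it needs to be stated and proved. The paper's push-forward formula makes this automatic because $\Omega(\emptyset)=\emptyset$; your version should establish the corresponding fact explicitly before invoking the Boolean formalism.
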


The uniqueness of \eqref{kraka} follows from (i) and (ii), since any constructible set can be obtained from a finite number of Zariski-open sets by taking intersections and complements.

If $X'$ is a open subset of $X$ and $T|_{X'}$ is the natural restriction of $T\in\PM(X)$ to $X'$, then Theorem \ref{zariski} implies that 
\begin{equation}\label{asia}
(\1_W T)|_{X'}=\1_{W\cap X'}(T|_{X'}).
\end{equation}
Indeed \eqref{asia} holds for Zariski-closed $W$ in $X$ in view of Proposition \ref{oppna}, and the general case follows by inductively applying (i) and (ii) to $X$ and $X'$. In particular, $(\1_W T)|_{X'}=0$ if $T|_{X'}=0$, and so $\supp \1_W T \subset \supp T$. 
Moreover, $\1_\emptyset T =0$ (by (i)), so if $U=\overline W^c$, then by \eqref{asia} $(\1_W T)|_U=\1_{W\cap U}(T|_U)=\1_\emptyset(T|_U)=0$, and thus $\supp \1_W T \subset \overline W$.
Hence
\begin{equation*}
\supp \1_W T\subset \overline W \cap \supp T.
\end{equation*}
Furthermore, it follows from (i) and (ii) that
\[
\1_{W\cup W'} T=\1_W T + \1_{W'} T - \1_{W\cap W'} T.
\]
Theorem ~\ref{zariski} also implies that
\begin{equation}\label{mannen}
\1_W (\xi\wedge T)=\xi\wedge (\1_W T), \quad \xi \text{ smooth form.}
\end{equation}
Indeed, \eqref{mannen} holds if $W$ is open in light of Proposition ~\ref{oppna} and by (i) and (ii) it extends to all constructible sets.

\begin{proof}
First suppose that $T$ is a sum of elementary currents in $\C^n_\sigma$, that is, $T=\sum \tau_j$, where each $\tau_j$ is of the form \eqref{principe}, and moreover that $W$ is in the Boolean algebra $\B(H_1,\ldots, H_n)$ generated by the coordinate hyperplanes $H_i=\{\sigma_i=0\}$.

The constructible sets in $\mathcal B(H_1,\ldots, H_n)$ can be seen to correspond precisely to subsets of the power set $\P([n])$ of $[n]=\{1,\ldots, n\}$. 
First, identify $\omega\in\P([n])$ with the constructible set 
\[
W_\omega=\{\sigma_i=0 \text{ if } i\in\omega, \sigma_i\neq 0 \text{ if } i\notin \omega\};
\]
then all $W_\omega$ are disjoint and their union is $\C^n$.  
Next, we claim that to each $W\in\B(H_1,\ldots, H_n)$ there is a unique $\Omega=\Omega(W)\subset \P([n])$ such that $W=\bigcup_{\omega\in \Omega} W_\omega$. To see this first note that if such a $\Omega$ exists it is unique since the $W_\omega$ are disjoint. Next, observe that $H_i=\bigcup_{\omega\ni i} W_\omega$ and furthermore that if $\Omega(W)$ and $\Omega(W')$ are well defined, then 
\begin{equation}\label{purple}
(\Omega(W))^c=\Omega(W^c) \text{ and } \Omega(W)\cap \Omega(W')=\Omega(W\cap W')
\end{equation}
and so $\Omega(W^c)$ and $\Omega(W\cap W')$ are well defined. 
The claim now follows by induction, and so each constructible set in $\mathcal B(H_1,\ldots, H_n)$ is represented by an element in $\P([n])$.

Let $\D$ be the mapping from the set of elementary currents on $\C^n_\sigma$ to $\P([n])$ that sends \eqref{principe} to the set $\{i_1,\ldots, i_q\}$ corresponding to its residue factors. Then the mapping 
\begin{equation}\label{simpel}
(W,T)\mapsto \langle W,T \rangle =\sum_{j: \D(\tau_j)\in \Omega(W)} \tau_j 
\end{equation}
is clearly linear in $T$ and moreover $\langle W,T \rangle$ is in $\PM^{p,q}(\C^n)$ if $T$ is. 
Also, because of \eqref{purple},
\begin{equation}\label{appel}
\langle W^c, T \rangle = T - \langle W, T \rangle \text{ and }
\langle W\cap W', T \rangle = \langle W, \langle W', T \rangle \rangle.
\end{equation}

Now, let us fix $W\in\mathcal C(X)$. Then there exist constructible sets $W_1,\ldots, W_s=W$ and Zariski-closed sets $V_1,\ldots V_r$, such that $W_{j+1}$ is of the form $W_{j+1}=A^c$ or $W_{j+1}=B\cap C$, for some $A,B,C\in\{W_1,\ldots,W_j\}\cup\{V_k\}$. Recall that by Proposition \ref{oppna}, $\1_{V_k}T$ is well-defined for all $T\in\PM(X)$. We can therefore define $\1_W$ inductively by letting $\1_{W_{j+1}}T$ equal $(1-\1_A)T$ or $\1_B(\1_C T)$, respectively.

In order to show that this definition is independent of the ``representation'' $\{W_j, V_k\}$ of $W$ let us fix $\{W_j, V_k\}$ and $T\in\PM(X)$. Let $\Pi$ be (compositions of) resolutions of singularities such that $T$ is of the form \eqref{guds} and moreover, for each $k$, $\Pi^{-1}(V_k)$ is a union of hyperplanes $h_1,\ldots,h_n$. Note that this implies that each $W_j\in\mathcal B(h_1,\ldots,h_n)$. 
We claim that
\begin{equation}\label{talle}
\1_W T= \sum \Pi_* \langle \Pi^{-1}(W), \tau_\ell \rangle. 
\end{equation}
To prove this claim observe first that by \eqref{gren} \eqref{talle} holds if $W\in\{V_k\}$. Next, assume that \eqref{talle} holds for $W_1,\ldots,W_j$, and let $A,B,C\in\{W_1,\ldots ,W_j\}\cup\{V_k\}$. Then 
\[
\1_{A^c}T=(1-\1_A)T=\sum \Pi_*(\tau_\ell - \langle \Pi^{-1}(A), \tau_\ell\rangle )
\]
and
\[
\1_{B\cap C}T=\1_B(\1_C)T=\sum \Pi_*\langle \Pi^{-1}(B), \langle \Pi^{-1}(C), \tau_\ell \rangle \rangle.
\]
(To be precise, for the last statement we have used the fact that if \eqref{talle} holds for $T=\sum_{\ell\in L}\Pi_* \tau_\ell$, then it also holds for 
$\sum_{\ell\in L'}\Pi_* \tau_\ell$ if $L'\subset L$.)
By \eqref{appel} it follows that for any elementary current $\tau$, 
\[
1-\langle \Pi^{-1}(A), \tau \rangle = \langle \Pi^{-1}(A)^c, \tau \rangle =
\langle \Pi^{-1}(A^c), \tau \rangle
\]
and
\[
\langle \Pi^{-1}(B), \langle \Pi^{-1}(C), \tau \rangle \rangle =
\langle \Pi^{-1}(B)\cap \Pi^{-1}(C), \tau \rangle =
\langle \Pi^{-1}(B\cap C), \tau \rangle.
\]
Hence \eqref{talle} holds for $W_{j+1}$ and the claim follows by induction. 

Observe that the right hand side of \eqref{talle} only depends of $W$ and not on the representation $\{W_j, V_k\}$. We conlude that the definition of $\1_W$ is intrinsic. 
If we choose $\Pi$ so that $\Pi^{-1}(W)$ and $\Pi^{-1}(W')$ are both unions of hyperplanes it follows from \eqref{appel} and \eqref{talle} that \eqref{kraka} satisfies (i) and (ii). Also, the mapping \eqref{kraka} is linear in $T$ since \eqref{simpel} is. 

\end{proof}

Observe that a posteriori $\1_W T = \langle W, T \rangle$ if $W\in\mathcal B (H_1, \ldots, H_n)$ and $T$ is a sum of elementary currents. Let us illustrate the mapping \eqref{simpel} with a simple example. 
\begin{ex}\label{dimtva}
Suppose $n=2$. Then the four elements in $\P([2])$, $\{1,2\}$, $\{1\}$, $\{2\}$ and $\emptyset$ correspond to the origin, the $\sigma_2$-axis $H_1$ with the origin removed, the $\sigma_1$-axis $H_2$ with the origin removed, and $\C^2$ with the coordinate axes removed, respectively. For example $H_2$ is given as $W_{\{2\}}\cup W_{\{1,2\}}$. Suppose that 
\[
T=\alpha \left [ \frac{1}{\sigma_1^3} \right ]+
\beta \left [ \frac{1}{\sigma_2} \right ] \dbar \left [ \frac{1}{\sigma_1^2} \right ]+
\gamma ~\dbar \left [ \frac{1}{\sigma_1} \right ]\wedge \dbar \left [ \frac{1}{\sigma_2^2} \right ]=\tau_1+\tau_2+\tau_3,
\]
where $\alpha$, $\beta$ and $\gamma$ are just smooth functions with compact support. Then $\D(\tau_1)=\emptyset$, $\D(\tau_2)=\{1\}$ and $\D(\tau_3)=\{1,2\}$. Now $\langle H_2, T \rangle =\tau_3$ whereas $\langle W, T \rangle =\tau_1+\tau_3$ if $W=W_\emptyset\cup W_{\{1,2\}}$.
\end{ex}
\section{Annihilators of pseudomeromorphic currents}\label{havet}
Let $\PM_x$ denote the $\mathcal E_x$-module of germs of pseudomeromorphic currents at $x\in X$. For $T\in \PM_x$ let $\ann T$ denote the annihilator ideal $\{\varphi\in\O_x; \varphi T=0\}$ in $\O_x$. 

\begin{ex}
Assume $T\in\PM_x$ and let $W$ be a germ of a constructible set at $x$. 
Then if $\varphi\in\O_x$ 
\[
\varphi T=\1_W (\varphi T) + \1_{W^c}(\varphi T)=
\varphi (\1_W T)+\varphi (\1_{W^c} T),
\]
where the first equality follows using (i) and the second one from \eqref{mannen}. Hence if $\varphi\in\ann \1_W T\cap \ann \1_{W^c} T$, then $\varphi\in \ann T$. On the other hand if $\varphi T=0$, then by \eqref{mannen} $\varphi(\1_W T)=\1_W (\varphi T)=0$ and analogously $\varphi (\1_{W^c} T)=0$. Thus 
\[
\ann T=\ann \1_W T\cap \ann \1_{W^c} T.
\]
\end{ex}

For a germ $Z$ at $x$ of a variety let $I_{Z}$ denote the ideal in $\O_x$ of germs of holomorphic functions that vanish on $Z$ and for an ideal $J\subset \O_x$ let $V(J)$ denote the (germ of the) variety of $J$. 

\begin{prop}\label{annanna}
Suppose that $Z$ is an irreducible germ at $x$ of a variety of codimension $q$. If $T\in\PM_x^{p,q}$ has its support in $Z$ then either $T = 0$ or $\ann T$ is an $I_{Z}$-primary ideal. 
\end{prop}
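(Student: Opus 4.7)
The plan is to establish that $\ann T$ is $I_Z$-primary by verifying separately (a) $\sqrt{\ann T}=I_Z$ and (b) that $\ann T$ is primary. Throughout I assume $T\ne 0$, so that $\ann T$ is a proper ideal.

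For the inclusion $\sqrt{\ann T}\subseteq I_Z$, suppose $\varphi\in\O_x$ with $\varphi^N T=0$ and $\varphi\notin I_Z$. Outside $V(\varphi)$ the factor $\varphi^N$ is invertible, so $T$ must vanish there, which forces $\supp T\subseteq V(\varphi)\cap Z$. Since $Z$ is irreducible of codimension $q$ and $\varphi$ does not vanish identically on $Z$, the intersection $V(\varphi)\cap Z$ has codimension at least $q+1$ in $X$. Corollary \ref{vaga} then yields $T=0$, contradicting the hypothesis.

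For the reverse inclusion $I_Z\subseteq\sqrt{\ann T}$, fix $\varphi\in I_Z$. Using the regrouping procedure from the proofs of Propositions \ref{oppna} and \ref{antiholo}, together with a further application of Hironaka, I can pass to a composition of resolutions $\Pi$ on which $T=\sum\Pi_*\tau_\ell$, where each $\tau_\ell$ is elementary with $\supp\tau_\ell\subseteq\Pi^{-1}(Z)$ (a union of coordinate hyperplanes) and $\Pi^*\varphi=\sigma^\gamma\cdot u$ for some monomial $\sigma^\gamma$ and some nonvanishing $u$. For every residue factor $\dbar[1/\sigma_{i_j}^{a_{i_j}}]$ of $\tau_\ell$ the hyperplane $\{\sigma_{i_j}=0\}$ sits inside $\Pi^{-1}(Z)$; since $\varphi$ vanishes on $Z$, the pullback $\Pi^*\varphi$ vanishes on $\{\sigma_{i_j}=0\}$, and therefore $\sigma_{i_j}$ divides $\sigma^\gamma$. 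Combined with the one-variable identity $\sigma^{a}\dbar[1/\sigma^{a}]=0$, this shows that a sufficiently large power $(\Pi^*\varphi)^N$ annihilates every $\tau_\ell$ in the (locally finite) sum, and pushing down yields $\varphi^N T=0$.

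For primariness, assume $\varphi\psi T=0$ with $\varphi T\ne 0$, and set $T':=\varphi T$. Then $T'\in\PM^{p,q}_x$ is nonzero with $\supp T'\subseteq Z$, and $\psi T'=0$. Applying the first inclusion to $T'$ gives $\psi\in\ann T'\subseteq\sqrt{\ann T'}\subseteq I_Z=\sqrt{\ann T}$, which is the primary condition. The one nontrivial technical step is the second one, where the pseudomeromorphic representation of $T$ must be reorganized so that each elementary piece has support in $\Pi^{-1}(Z)$ and simultaneously $\Pi^*\varphi$ becomes monomial; both ingredients are already contained in the resolutions used to prove Propositions \ref{oppna} and \ref{antiholo}, so the remaining work amounts to combining them with the elementary residue calculus and Corollary \ref{vaga}.
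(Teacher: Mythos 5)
Your proof is correct and follows essentially the same three-step plan as the paper: establish $\sqrt{\ann T}\subseteq I_Z$ (via Corollary~\ref{vaga}), establish $I_Z\subseteq\sqrt{\ann T}$, and then deduce primariness by applying the radical computation to $\varphi T$. For the middle step the paper simply appeals to the finite order of $T$, whereas you unwind the resolution-of-singularities machinery; this is a reasonable, more explicit route, and it is in fact close to the mechanism that makes the paper's one-line assertion work for pseudomeromorphic currents.

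One small inaccuracy to fix: after passing to the resolution $\Pi$ you assert that \emph{every} residue factor $\dbar[1/\sigma_{i_j}^{a_{i_j}}]$ of $\tau_\ell$ has $\{\sigma_{i_j}=0\}\subseteq\Pi^{-1}(Z)$. That is not forced by $\supp\tau_\ell\subseteq\Pi^{-1}(Z)$. Since $\supp\tau_\ell$ is the \emph{intersection} of the residue hyperplanes while $\Pi^{-1}(Z)$ is a union of coordinate hyperplanes, all one can conclude (and all that $T=\1_Z T$ together with the formula \eqref{gren} from Proposition~\ref{oppna} gives) is that \emph{at least one} residue factor $\sigma_{i_{j_0}}$ satisfies $\{\sigma_{i_{j_0}}=0\}\subseteq\Pi^{-1}(Z)$. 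Fortunately that single factor is all your argument needs: $\sigma_{i_{j_0}}$ divides $\Pi^*\varphi$ and $\sigma_{i_{j_0}}^{a_{i_{j_0}}}\dbar[1/\sigma_{i_{j_0}}^{a_{i_{j_0}}}]=0$, so a high power of $\Pi^*\varphi$ still kills $\tau_\ell$. Replace ``for every residue factor'' with ``for some residue factor'' and the step is airtight.
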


\begin{proof}
Suppose $\varphi\in\O_x$ vanishes on $Z$. Then, since $T$ has finite order, $\varphi^m T =0 $ for $m$ large enough. It follows that $V(\ann T)\subset Z$. 
If $h\in\ann T$, that is, $hT=0$, then $\supp T\subset Z\cap\{h=0\}$. Since $Z$ is irreducible, $Z\cap\{h=0\}$ is either equal to $Z$ or has codimension $\geq q+1$. In the latter case $T=0$ according to Corollary ~\ref{vaga}. Hence $V(\ann T) = Z$ if $T\neq 0$. 

If $\varphi\psi \in\ann T$, then $\varphi\in\ann (\psi T)$. Since $\psi T$ satisfies the assumptions of the proposition, the first part of the proof implies that if $\psi \notin \ann T$, then $\varphi \in I_{Z}=\sqrt{\ann T}$. Thus $\ann T$ is $I_{Z}$-primary. 
\end{proof}

\begin{remark}\label{vid}
Note that the proof only uses that $T$ has the SEP with respect to $Z$. Thus $\ann T$ is $Z$-primary whenever $T\in\PM_x$ has support on $Z$, has the SEP with respect to $Z$ and does not vanish identically.
\end{remark}

\section{Decomposition of $R$ with respect to $\ass J$}\label{stratifikation}
We will now use the results from the the previous sections to make the decomposition of $R$. 
Let us start by briefly recalling the construction of residue currents from ~\cite{AW}. 
Let $J$ be a submodule of $\O_x^{\oplus{r_0}}$, in particular if $r_0=1$, then $J$ is an ideal in $\O_x$, and let 
\begin{equation}\label{upplosning}
0\to \O_x^{\oplus r_N}\stackrel{F_N}{\longrightarrow}\ldots \stackrel{F_2}{\longrightarrow} \O_x^{\oplus r_1}\stackrel{F_1}{\longrightarrow} \O_x^{\oplus r_0},
\end{equation} 
be a free resolution of $\O_x$-modules of $\O_x^{\oplus{r_0}}$, where $J=\Im (\O_x^{\oplus r_1}\to\O_x^{\oplus r_0})$.
Now \eqref{upplosning} induces a holomorphic complex
\begin{equation}\label{vektor}
0\to E_N\stackrel{F_N}{\longrightarrow}\ldots \stackrel{F_2}{\longrightarrow} E_1\stackrel{F_1}{\longrightarrow} E_0,
\end{equation} 
of (trivial) $r_k$-bundles $E_k$ over some neighborhood $\Omega$ of $x\in X$ that is exact outside $Z=V(J)$ and such that $\O_x(E_k)\simeq \O_x^{\oplus r_k}$. Equipping the bundles $E_k$ with Hermitian metrics we construct a current $R$ that has support on $Z$, is annihilated by $\bar I_Z$, and 
\begin{equation}\label{delarna}
R=R_p+\cdots + R_\mu,
\end{equation}
where $p=\codim Z$, $\mu=\min(n,N)$, and $R_j$ is a $(0,j)$-current that takes values in $\Hom (E_0,E_j)$.

Moreover, if $\varphi$ is a germ of a holomorphic section of $E_0$ at $x$, that is, an element in $\O_x^{\oplus r_0}$, then $\varphi\in J$ if and only if $R\varphi =0$ and $\varphi$ lies generically in $\Im F_1$. If $F_1$ is generically surjective, that is, $\codim \O_x^{\oplus r_0}/J > 0$, in particular if $r_0=1$ and $F_1\not\equiv 0$, the latter condition is automatically satisfied and $J=\ann R$. In general, one can extend the complex ~\eqref{vektor} with a mapping $F_0: E_0 \to E_{-1}$ so that the extended complex is generically exact. Then $J=\ker F_0\cap\ann R$. 

Recall that a proper submodule $J$ of the $\O_x$-module $\O_x^{\oplus r}$ is primary if $\varphi\xi\in J$ implies that $\xi \in J$ or $\varphi^\nu\in\ann (\O_x^{\oplus r}/J)$ for some $\nu>0$. If $J\subset\O_x^{\oplus r}$ is primary then $\ann (\O_x^{\oplus r}/J)$ is a primary ideal and so $\p=\sqrt{\ann (\O_x^{\oplus r}/J)}$ is a prime ideal. We say that $J$ is $\p$-primary. As for ideals in $\O_x$, a submodule of $\O_x^{\oplus r}$ always admits a primary decomposition; that is, $J=\bigcap J_k$, where $J_k$ are $\p_k$-primary modules. If all $\p_k$ are different and no intersectands can be removed, then the primary decomposition is said to be minimal and the $\p_k$ are said to be associated prime ideals of $J$. The set of associated prime ideals is unique and we denote it by $\ass J$. 

\begin{ex}\label{kung}
If $F_0: \O_x^{\oplus r_{0}}\to \O_x^{\oplus r_{-1}}$ is a non-zero $\O_x$-homomorphism, then $J=\ker F_0$ is a $\p$-primary module, with $\p=(0)$. Indeed, we have that $\sqrt{\ann \ker F_0}=(0)$, and moreover if $\varphi\in\O_x$ and $\xi\in\O_x^{\oplus r_0}$ are such that $F_0(\varphi \xi)=0$, then $\varphi F_0\xi=0$ and so $\xi\in\ker F_0$ or $\varphi=0$.
\end{ex}

Let $R^{(0)}= F_0$ so that $\ann R^{(0)}=\ker F_0$. For each associated prime ideal $\p\neq (0)$ of $J$ let 
\begin{equation}\label{rp}
R^{\p}=R \1_{V(\p)\setminus \bigcup_{\q\supset\p} V(\q)}.
\end{equation}
In view of \eqref{delarna} (and Corollary ~\ref{vaga}) we have that $R^{\p}=R^{\p}_q+\ldots + R^{\p_i}_\mu$, where $q=\codim \p$ and $R^{\p}_j$ is of bidegree $(0,j)$ and takes values in $\Hom(E_0,E_j)$.  

\begin{thm}\label{prima}
Let $J$ be a submodule of $\O_x^{\oplus r_0}$, and let $R$ be the residue current associated with \eqref{upplosning} (and the choice of Hermitian metrics on the bundles $E_k$ in \eqref{vektor}).
Then for each $\p\in\ass J$, $R^\p$ has support on $V(\p)$ and has the SEP with respect to $V(\p)$, $\ann R^\p\subset \O_x^{\oplus r_0}$ is $\p$-primary, 
\begin{equation}\label{flagga}
R=\sum_{\p\in\ass J,~ \p\neq (0)} R^{\p},
\end{equation} 
and
\begin{equation}\label{primaruppdelning}
J=\ann R^{(0)} \cap \ann R=\bigcap_{\p\in\ass J} \ann R^{\p}
\end{equation}
yields a minimal primary decomposition of $J$.
\end{thm}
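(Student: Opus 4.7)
The plan is to prove the theorem in three stages: first the decomposition \eqref{flagga} together with the support inclusion $\supp R^\p \subset V(\p)$; then the SEP of each $R^\p$, from which the $\p$-primary property of $\ann R^\p$ follows via Remark \ref{vid}; and finally the assembly of the minimal primary decomposition of $J$.

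The sets $W_\p := V(\p) \setminus \bigcup_{\q \in \ass J,\, \q \supsetneq \p} V(\q)$, for $\p \in \ass J \setminus \{(0)\}$, are pairwise disjoint with union $\bigcup_{\p \neq (0)} V(\p)$, which by the support statement for $R$ in \cite{AW} contains $\supp R$. Iterated application of Theorem \ref{zariski}(i)--(ii) then yields $R = \sum_{\p \neq (0)} \1_{W_\p} R = \sum_{\p \neq (0)} R^\p$, while $\supp R^\p \subset \overline{W_\p} \cap \supp R \subset V(\p)$ is immediate from $\supp \1_W T \subset \overline W \cap \supp T$. The SEP is the technical heart. Given $V' \subset V(\p)$ of codimension $\geq q+1$ with $q = \codim \p$, property (ii) reduces $\1_{V'} R^\p$ to $\1_{V' \cap W_\p} R$. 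I would pass to a local resolution $\Pi$ in which $R = \sum \Pi_*\tau_\ell$ with elementary $\tau_\ell$, and each of $\Pi^{-1}(V(\p))$, $\Pi^{-1}(V')$, and $\Pi^{-1}(V(\q))$ for $\q \supsetneq \p$ is a union of coordinate hyperplanes. The explicit formula \eqref{talle} together with the $\D$-map from the proof of Theorem \ref{zariski} then reduce the question to showing that no elementary summand $\tau_\ell$ can simultaneously have its residue-factor index set both contain the coordinates cutting out $\Pi^{-1}(V')$ and avoid those cutting out $\Pi^{-1}(V(\q))$ for every $\q \supsetneq \p$. Since $\codim V' > q$, the first condition forces strictly more residue factors than the minimum needed to support the current on $\Pi^{-1}(V(\p))$, and the ``excess'' factor defines a stratum in $\Pi^{-1}(V(\p))$ that, by the structure of the associated primes of $\ann R$ in \cite{AW}, must lie inside $\Pi^{-1}(V(\q))$ for some $\q \supsetneq \p$ --- contradicting the second condition. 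Hence $\1_{V'} R^\p = 0$, giving the SEP; then Remark \ref{vid} gives that $\ann R^\p$ is $\p$-primary whenever $R^\p \neq 0$.

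To assemble the primary decomposition, I would use the identity $\ann T = \ann \1_W T \cap \ann \1_{W^c} T$ from the example at the start of Section \ref{havet}, applied inductively across the disjoint partition $\{W_\p\}$, to obtain $\ann R = \bigcap_{\p \neq (0)} \ann R^\p$. Combined with $J = \ker F_0 \cap \ann R = \ann R^{(0)} \cap \ann R$ and Example \ref{kung} (which shows $\ker F_0$ is $(0)$-primary when $(0) \in \ass J$), this gives $J = \bigcap_{\p \in \ass J} \ann R^\p$, a primary decomposition with distinct radicals. For minimality I would invoke the uniqueness of $\ass J$: if some $R^\p$ with $\p \in \ass J$ vanished, then $\ann R^\p = \O_x$ would drop out, leaving a primary decomposition whose associated primes are a strict subset of $\ass J$ --- impossible. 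Hence every $R^\p \neq 0$, every $\ann R^\p$ is a genuine $\p$-primary submodule, and the decomposition is minimal. The principal obstacle throughout is the resolution-based SEP argument, which requires simultaneously resolving the varieties $V(\p)$, $V(\q)$, and $V'$ into coordinate arrangements while tracking precisely which elementary summands of $R$ survive the combined restriction.
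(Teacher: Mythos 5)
Your claim that the sets $W_\p=V(\p)\setminus\bigcup_{\q\supsetneq\p}V(\q)$ are pairwise disjoint is false. If $\p_1,\p_2\in\ass J$ and neither contains the other (e.g.\ $(z)$ and $(w)$ for $J=(zw)$, with no further associated primes), then $W_{\p_i}=V(\p_i)$ and these two varieties intersect. Consequently \eqref{flagga} does \emph{not} follow from Theorem \ref{zariski}(i)--(ii) alone: one has to know that the overlap regions, which have codimension strictly greater than $\max(\codim\p_1,\codim\p_2)$, contribute nothing -- and that is precisely the SEP, which you only prove afterwards. Your logical order is inverted. The paper establishes the SEP first (Lemma \ref{bell}) and then deduces \eqref{flagga}, combining same-codimension primes via the identity $R^\p+R^\q=R\1_{(V(\p)\cup V(\q))\setminus\bigcup_{\codim\r>q}V(\r)}$ and summing over codimensions.

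The more serious gap is in the SEP argument itself. You reduce to a combinatorial statement about the residue-factor index sets $\D(\tau_\ell)$ in a resolution $\Pi$, and then rule out the offending summands by appealing to ``the structure of the associated primes of $\ann R$ in \cite{AW}.'' But the relation between the residue factors of $\tau_\ell$ upstairs and the support strata of $R$ downstairs is indirect (a large number of $\dbar$-factors upstairs can push forward to a low-codimension set), and the assertion that the excess stratum must sit inside some $\Pi^{-1}(V(\q))$ with $\q\supsetneq\p$ is exactly the non-trivial content that needs proof; invoking it here is circular. The paper's Lemma \ref{bell} proves the SEP by a different and essentially algebraic mechanism: for the rank loci $Z_k$ of the maps $F_k$, Corollary 20.14 of \cite{E} guarantees that every codimension-$k$ irreducible component of $Z_k$ is $V(\r)$ for some $\r\in\ass J$; combined with Corollary \ref{vaga} this yields $R^\p_{q+\ell}\1_{Z_{q+\ell}}=0$ for $\ell\geq 1$, and the structural identity $R_{k+1}=\alpha_kR_k$ outside $Z_{k+1}$ (with $\alpha_k$ a smooth $\Hom(E_k,E_{k+1})$-valued form) propagates vanishing inductively from $R^\p_q$ to the higher-degree pieces. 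None of this input appears in your sketch, and I do not see how resolution combinatorics alone can replace it.

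On the positive side, your observation that $R^\p\neq 0$ for every $\p\in\ass J$ -- because a vanishing $R^\p$ would produce a primary decomposition with fewer associated primes than $\ass J$, contradicting uniqueness -- is correct, and it fills in a point the paper leaves implicit, since Propositions \ref{annanna} and \ref{modulanna} only assert $\p$-primariness of $\ann R^\p$ under the hypothesis $R^\p\neq 0$.
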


The decomposition \eqref{flagga} is unique once the $R^\p$ are required to have support on $V(\p)$ and the SEP with respect to $V(\p)$. Indeed, suppose that $\p$ is of minimal codimension, say $p$, among the associated primes. Then $R^\p=R$ outside a set of codimension $\geq p+1$ and so, because of the SEP, $R^\p$ is uniquely determined. Consequently $R'=\sum_{\codim \p > p} R^\p$, whose support is of codimension $\geq p+1$, is uniquely determined. By the same argument applied to $R'$ all $R^\p$ with $\codim \p=p+1$ are unique. The general statement follows by induction. 
In the same way, as soon as we have the decomposition \eqref{flagga} with the above assumptions on $R^\p$, then \eqref{primaruppdelning} must hold.

\begin{remark}
When constructing the decomposition \eqref{flagga} we have used the a priori knowledge of the associated primes of $J$. However, this is actually not necessary. Suppose that $T\in\PM$ has support on the variety $V$ of pure codimension and let $V_j$ denote the irreducible components of $V$. If $T$ has the SEP with respect to $V$ it follows that $T=\sum T\1_{V_j}$ gives the desired decomposition, that is, $T\1_{V_j}$ has support on $V_j$ and the SEP with respect to $V_j$, and $\ann T=\bigcap \ann T\1_{V_j}$ is a primary decomposition of $\ann T$. If $T$ does not have the SEP with respect to $V$ one can show that there is a subvariety $V'\subset V$ such that $T\1_{V\setminus V'}$ has the SEP with respect to $V$. The above arguments can then be applied to $T\1_{V'}$. After a finite number of steps we obtain a primary decomposition of $\ann T$.  
This idea will be elaborated in more detail in a forthcoming paper.
\end{remark}

We first show a lemma which asserts that $R^\p$ has the SEP. 
\begin{lma}\label{bell}
Suppose that $\p\in \ass J$ is of codimension $q>0$.
Then $\ann R^\p=\ann R_q^\p$. 
Moreover, suppose that $W$ is a variety of codimension
$>q$. Then $R^{\p}\1_W=0$.
\end{lma}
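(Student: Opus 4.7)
My approach is to tackle the two claims in reverse order: the ``Moreover'' (SEP-type) statement first, and then the annihilator identity, which should rely on it.

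For the vanishing $R^\p \1_W = 0$ with $W$ a variety of codimension $>q$: since $R^\p = \sum_{j=q}^\mu R^\p_j$ with the summands having distinct bidegrees, it suffices to prove $R^\p_j \1_W = 0$ for each $j\geq q$. The base case $j = q$ is essentially direct from Corollary \ref{vaga}: $R^\p_q$ has bidegree $(0,q)$, and its support lies in $V(\p)$ (because $\supp \1_U T\subset \overline{U} = V(\p)$, as $V(\p)$ is irreducible and the excised $V(\q)$ are proper subvarieties of it); thus $R^\p_q \1_W$ has support in $V(\p)\cap W$ of codimension strictly greater than $q$, and vanishes. For $j>q$, I would invoke the basic support property from \cite{AW} that $\supp R_j\subset Z_j$ with $\codim Z_j\geq j$, so $\supp R^\p_j\subset V(\p)\cap Z_j$; since $\codim V(\p) = q < j \leq \codim Z_j$, this intersection is a proper subvariety of $V(\p)$ of codimension $\geq j$. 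The crucial point is that the excision by $\1_U$ removes the codimension-$j$ components of $V(\p)\cap Z_j$, leaving an effective support of codimension strictly greater than $j$; Corollary \ref{vaga} then forces $R^\p_j \1_W = 0$.

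For the annihilator identity, the inclusion $\ann R^\p\subset \ann R^\p_q$ is immediate by matching bidegrees: if $\varphi R^\p = 0$, then each summand $\varphi R^\p_j$ vanishes individually. For the reverse inclusion, starting with $\varphi\in\ann R^\p_q$, I would propagate the annihilation upward using the structural relations among the $R_j$ inherited from the complex in \cite{AW} (schematically $\dbar R_{j-1} = F_j R_j$): such relations force $\varphi R^\p_{j+1}$ to be supported on a subvariety strictly smaller than $V(\p)$, at which point the SEP from the first part immediately yields $\varphi R^\p_{j+1} = 0$. Iterating gives $\varphi\in\ann R^\p_j$ for all $j\geq q$.

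The main obstacle, expected to require the most care, is the $j>q$ case of the SEP. One must verify the non-trivial geometric fact that the codimension-$j$ components of $V(\p)\cap Z_j$ are precisely varieties $V(\q)$ for $\q\in\ass J$ strictly containing $\p$ --- exactly those excised by $\1_U$ --- so that the effective support of $R^\p_j$ after excision has codimension strictly greater than $j$, putting one in position to apply Corollary \ref{vaga}.
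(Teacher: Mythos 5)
Your proof has two significant gaps, and the missing ingredient in both is the same: the structural relation $R_{k+1} = \alpha_k R_k$ outside $Z_{k+1}$, where $\alpha_k$ is a \emph{smooth} $\Hom(E_k,E_{k+1})$-valued $(0,1)$-form (see the proof of Theorem 4.4 in \cite{AW}). This relation is compatible with the excision operator $\1$ because $\1$ commutes with multiplication by smooth forms (equation \eqref{mannen}). The relation $\dbar R_{j}=F_{j+1}R_{j+1}$ you invoke for the annihilator identity, by contrast, does \emph{not} descend to $R^\p$: the restriction $\1_{V(\p)\setminus\bigcup V(\q)}$ does not commute with $\dbar$ (one picks up a term $\dbar|h|^{2\lambda}\wedge T|_{\lambda=0}$), so $\dbar R^\p_j \neq F_{j+1}R^\p_{j+1}$ in general, and your propagation step fails.

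For the ``Moreover'' part, the assertion $\supp R_j\subset Z_j$ is not a support property established in \cite{AW} (the basic one is $\supp R\subset Z=V(J)$), and I do not believe it holds: the relation $R_{k+1}=\alpha_k R_k$ outside $Z_{k+1}$ makes $R^\p_{q+1}$ live on essentially all of $V(\p)$, not just on $V(\p)\cap Z_{q+1}$. Worse, if your claim were correct, your argument would show $R^\p_j=0$ outright for $j>q$ (support of codimension $>j$ plus Corollary \ref{vaga}), which is strictly stronger than the lemma and would make both claims of the lemma trivial; the paper's proof is careful to prove only the restriction $R^\p_{q+\ell}\1_{Z_{q+\ell}}=0$. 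You have correctly anticipated the geometric input that codimension-$(q+\ell)$ components of $Z_{q+\ell}$ are varieties of associated primes (Corollary 20.14 in \cite{E}), and the paper does use exactly this to establish the key observation $R^\p_{q+\ell}\1_{Z_{q+\ell}}=0$. But that observation is then combined with the smooth factorization $R_{k+1}=\alpha_k R_k$ off $Z_{k+1}$ and an induction: writing $R^\p_{k+1}\1_W = (\alpha_k R^\p_k\1_W)\1_{X\setminus Z_{k+1}} + (R^\p_{k+1}\1_{Z_{k+1}})\1_W$ and killing each term. Both halves of the lemma follow from this same mechanism (starting from $R^\p_q\1_W=0$ for the SEP statement, from $\varphi R^\p_q=0$ for the annihilator identity), which your proposal does not reconstruct.
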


\begin{proof}
Let $Z_k$ denote the set where the mapping $F_k$ in \eqref{vektor} does not have optimal rank. The key observation is that $R_{q+\ell}^\p \1_{Z_{q+\ell}}=0$ for $\ell\geq 1$. To see this let $Z'$ be one of the irreducible components of $Z_{q+\ell}$. If $\codim Z' >q+\ell$, then $R_{q+\ell}^\p \1_{Z'}=0$ due to Corollary ~\ref{vaga}. On the other hand if $\codim Z'=q+\ell$, then $I_{Z'}\in\ass J$ according to Corollary ~20.14 in ~\cite{E}. Thus $R_{q+\ell}^\p\1_{Z'}=R_{q+\ell}^\p\1_{Z'\cap (V(\p)\setminus\bigcup_{\q\supset\p} V(\q))}=0$, since either $I_{Z'}\supset \p$ or $\codim Z'\cap V(\p) > q+ \ell$, in which case the current vanishes according to Corollary ~\ref{vaga}. 

To prove the first statement take $\varphi\in\ann R_q^\p$. Outside $Z_{k+1}$ it holds that $R_{k+1}=\alpha_k R_{k}$, where $\alpha_k$ is a smooth $\Hom(E_k,E_{k+1})$-valued $(0,1)$-form, see for example the proof of Theorem 4.4 in ~\cite{AW}. Now, by ~(i),
\[
R^\p_{q+1}\varphi=(\alpha_q R^\p_{q}\varphi) \1_{X\setminus Z_{q+1}}+(R^\p_{q+1}\varphi) \1_{ Z_{q+1}}=0.
\]
By induction it follows that $R^\p_{q+\ell}\varphi=0$ for $\ell>0$ and so $R^\p\varphi=0$. Thus $\ann R^\p=\ann R^\p_q$. 

For the second statement, note that $R^\p_q\1_W=0$ according to Corollary ~\ref{vaga}. It follows that $R^\p_{q+\ell}\1_W=0$ by the same induction as above. 
\end{proof}

We also need the following module version of Proposition ~\ref{annanna}.

\begin{prop}\label{modulanna}
Suppose that $Z$ is an irreducible germ at $x$ of a variety that has codimension $q$. If $T\in\PM_x^{p,q}(E_0^*)$ 
has its support in $Z$, then either $T \equiv 0$ or $\ann T\subset\O_x(E_0)$ is a $I_Z$-primary module.
\end{prop}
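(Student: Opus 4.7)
The plan is to reduce this to the scalar case handled in Proposition \ref{annanna} by pairing the vector-valued current with individual sections of $E_0$. I will verify both parts of the definition of a primary submodule: that the ideal $\ann(\O_x(E_0)/\ann T) = \{\varphi \in \O_x : \varphi T = 0\}$ has radical equal to $I_Z$, and the primary condition itself ($\varphi \xi \in \ann T$ forces $\xi \in \ann T$ or a power of $\varphi$ annihilating all of $T$).

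First I would establish that $\sqrt{\ann_{\O_x} T} = I_Z$ under the assumption $T \ne 0$, arguing exactly as in Proposition \ref{annanna}: since $T$ has finite order and $\supp T \subset Z$, any $\varphi \in I_Z$ satisfies $\varphi^m T = 0$ for $m$ large, giving $I_Z \subset \sqrt{\ann_{\O_x} T}$; conversely, if $h \in \O_x$ satisfies $hT = 0$ then $\supp T \subset Z \cap \{h=0\}$, so irreducibility of $Z$ forces either $h \in I_Z$ or $\codim Z \cap \{h=0\} \geq q+1$, the latter giving $T=0$ via Corollary \ref{vaga}. Since $Z$ is irreducible, $I_Z$ is prime, so the radical condition is verified.

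Next I would handle the primary property. Suppose $\varphi \in \O_x$ and $\xi \in \O_x(E_0)$ satisfy $\varphi \xi \in \ann T$, that is, $\varphi(\xi T) = 0$, and suppose $\xi \notin \ann T$, so that $\xi T \ne 0$. Writing $E_0$ trivially via a holomorphic frame $\{e_j\}$ with dual $\{e_j^*\}$, one checks that $\xi T = \sum \xi_j T_j$ where $T = \sum T_j \otimes e_j^*$ with $T_j \in \PM_x^{p,q}$; since $\PM$ is closed under multiplication by holomorphic (hence smooth) functions, $\xi T$ is a scalar pseudomeromorphic $(p,q)$-current with $\supp(\xi T) \subset \supp T \subset Z$. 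By Proposition \ref{annanna} applied to $\xi T$, the ideal $\ann(\xi T) \subset \O_x$ is $I_Z$-primary, and $\varphi \in \ann(\xi T)$ yields $\varphi \in \sqrt{\ann(\xi T)} = I_Z = \sqrt{\ann_{\O_x} T}$. Hence $\varphi^\nu \in \ann_{\O_x} T = \ann(\O_x(E_0)/\ann T)$ for some $\nu > 0$, which is precisely the primary condition.

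The whole argument is essentially a bootstrap from the scalar case, so no serious new obstacle appears; the only point requiring mild care is the verification that $\xi T$ lives in $\PM_x^{p,q}$ with support in $Z$, which follows from the closure of $\PM$ under multiplication by smooth forms (applied to each component in a trivialization of $E_0$) together with the containment $\supp(\xi T) \subset \supp T$.
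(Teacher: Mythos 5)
Your proposal is correct and follows essentially the same route as the paper: you reduce to the scalar Proposition~\ref{annanna} by pairing $T$ with sections $\xi \in \O_x(E_0)$, observing that $\xi T \in \PM_x^{p,q}$ has support in $Z$, and then read off both the radical identification $\sqrt{\ann(\O_x(E_0)/\ann T)}=I_Z$ and the primary condition from the scalar case. The paper packages the first step slightly differently, via the identity $\ann(\O_x(E_0)/\ann T)=\bigcap_\xi \ann(T\xi)$, but this is the same argument with a different bookkeeping.
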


\begin{proof}
For each $\xi \in \O_x(E_0)$, the scalar-valued current $T \xi$ satisfies the assumption of Proposition ~\ref{annanna}. Now, $\ann (\O_x(E_0)/\ann T)=\bigcap_{\xi\in \O_x(E_0)} \ann (T\xi)$. If $T\neq 0$, then $T\xi\neq 0$ for some $\xi\in \O_x(E_0)$ and hence it follows from Proposition ~\ref{annanna} that $V(\ann (\O_x(E_0)/\ann T))=Z$.

Moreover, suppose that $\varphi\in\O_x$ and $\xi\in\O_x(E_0)$ are such that $\varphi \xi\in\ann T$. Since the scalar-valued current $T\xi$ satisfies the assumptions of Proposition ~\ref{annanna} it follows that if $\xi\notin\ann T$, that is, $T\xi\neq 0$, then $\varphi\in I_{Z}$ and thus $\ann T$ is $I_Z$-primary.

\end{proof}

\begin{proof}[Proof of Theorem ~\ref{prima}]
Clearly $R^\p$ has support on $V(\p)$ and Lemma ~\ref{bell} asserts that it has the SEP. 
Throughout this proof we will repeatedly use (i) and (ii) in Theorem \ref{zariski}. 
From Example ~\ref{kung} we know that $\ann R^{(0)}=\ker F_0$ is $(0)$-primary.
Now, suppose that $\p\neq (0)$ and let $q=\codim \p$. Since $R^{\p}_{q}$ is a current of bidegree $(0,q)$ and $V(\p)$ is an irreducible variety of codimension $q$, it follows from Proposition ~\ref{annanna} that $\ann R^{\p}_{q}$ is $\p$-primary. Hence by Lemma ~\ref{bell}, $\ann R^\p$ is $\p$-primary. This could also be seen using Remark ~\ref{vid}.

Next, we show \eqref{flagga}.
By Lemma ~\ref{bell}, for $\p\neq (0)$ we have that $R^\p=R^\p \1_{V(\p)\setminus\bigcup_{\codim \r >\codim \p} V(\r)}$, which by the definition of $R^\p$ is equal to $R \1_{V(\p)\setminus\bigcup_{\codim \r >\codim \p} V(\r)}$.
Suppose that $\p$ and $\q$ are two associated prime ideals of the same codimension $q>0$. 
Then, by Lemma ~\ref{bell}, $R^\p=R^\p \1_{V(\p)\setminus V(\q)}$, since 
$\codim (V(\p)\cap V(\q))>q$. Moreover, in light of \eqref{rp}, this is equal to $R \1_{\left (V(\p)\setminus\bigcup_{\codim \r >k} V(\r)\right) \setminus V(\q)}$. 
Hence, 
\begin{multline*}
R^\p + R^\q =
R\1_{\left (V(\p)\setminus\bigcup_{\codim \r >q} V(\r)\right) \setminus V(\q)}
+R\1_{V(\q)\setminus\bigcup_{\codim \r >q} V(\r)}
=\\
R\1_{(V(\p)\cup V(\q) )\setminus\bigcup_{\codim \r >q} V(\r)},
\end{multline*}
and so
\begin{multline*}
\sum_{\p\in\ass J,~ \p\neq (0)} R^\p =
\sum_{q>0} \sum_{\codim \p = q} R^\p=
\sum_{q>0} R\1_{\bigcup_{\codim \p =q} V(\p)\setminus \bigcup_{\codim \r >q} V(\r)}=\\
R\1_{\bigcup_{\p \in \ass J, ~ \p\neq (0)} V(\p)} = R,
\end{multline*}
since $R$ has support on $V(J)=\bigcup_{\p \in \ass J,~\p\neq (0)} V(\p)$.

We need to show that $\ann R=\bigcap_{\p\in\ass J,~\p\neq (0)}\ann R^\p$. 
Clearly if $R\varphi=0$ then $R^\p\varphi=0$ if $\p\neq (0)$ and so $\ann R\subset \bigcap_{\p\in\ass J,~\p\neq (0)} \ann R^\p$. On the other hand if $R^\p\varphi=0$ for all associated prime ideals $\p\neq (0)$ then by \eqref{flagga} $R\varphi=\sum_{\p\in\ass J, \p\neq (0)} R^\p\varphi=0$ and we are done.
\end{proof}

\begin{ex}
Consider the ideal $(z^2,zw)$ with the associated prime ideals $\p=(z)$ and $\q=(z, w)$, where $\q$ is embedded.
It is easy to see that
\begin{equation*}
0\to \O_x \stackrel{F_2}{\longrightarrow}\O_x^2 \stackrel{F_1}{\longrightarrow} \O_x,
\end{equation*}
where
$
F_1=
\left [ \begin{array}{cc}
z^2 & z w 
\end{array}\right ] 
$
and 
$
F_2=
\left [ \begin{array}{c}
w \\ -z
\end{array}\right ]
$
is (minimal) resolution of $\O_x/J$. 
Assume that the vector bundles in the corresponding complex \eqref{vektor} are equipped with trivial metrics. Then $R^\p=[1/w]\dbar [1/z]$ and $R^\q=\dbar[1/z^2]\wedge\dbar[1/w]$, see Example ~2 in ~\cite{AW} or ~\cite{W2}. Thus we get the minimal primary decomposition 
\[
J=\ann R^\p\cap \ann R^\q = (z)\cap (z^2,w).
\]

Let us also point out that the primary decomposition \eqref{primaruppdelning} in general depends on the choice of Hermitian metrics.
Notice that $J=(z^2, z(w-az))$ for $a\in\C$. 
Thus if we make the same resolution and choice of metrics with respect to the coordinates $\zeta=z, \omega=w-az$, we obtain a residue current that gives the primary decomposition $J=(z)\cap (z^2,w-az),$ which is clearly different for different values of $a$. Now, since all minimal resolutions are isomorphic this new current is obtained from the original resolution with a new choice of metrics.
\end{ex}

\begin{ex}
If $J$ has no embedded primes, it is well known that the minimal primary decomposition is unique. In particular $R^\p$ must be independent of the choice of metrics. This can be verified directly, since outside an exceptional set $\O_x^{\oplus r}/J$ is Cohen-Macaulay and in that case $R$ is essentially canonical, compare to ~\cite{AW}, Section ~4.
\end{ex}

\begin{remark}\label{lolo}[The semi-global case]
Let $K\subset X$ be a Stein compact set, (that is, $K$ admits a neighborhood basis in $X$ consisting of Stein open subsets of $X$) and let $J$ be a submodule of $\O(K)^{r_0}$, where $\O(K)$ is the ring of germs of holomorphic functions on $K$. Due to Proposition 3.3 in ~\cite{AW} $J$ can be represented as the annihilator of a residue current as above. The ring $\O(K)$ is Noetherian precisely when $Z\cap K$ has a finite number of topological components for every analytic variety $Z$ defined in a neighborhood of $K$, see ~\cite{Siu}. In this case the arguments in this and the previous section go through and so we get a decomposition of the residue current analogous to the one in Theorem ~\ref{prima}. 
\end{remark}

\begin{ex}
Let $\J$ be a coherent subsheaf of a locally free analytic sheaf $\O(E_0)$ over a complex manifold $X$. From a locally free resolution of $\O(E_0)/\J$ we constructed in ~\cite{AW} a residue current $R$, whose annihilator sheaf is precisely $\J$. 
Let $Z_k$ be the (intrinsically defined) set where the $k$th mapping in the resolution does not have optimal rank (compare to the proof of Lemma ~\ref{bell}). Then $R$ can be decomposed as $R=\sum_k {}^kR$, where ${}^kR=R \1_{Z_k\setminus Z_{k+1}}$, so that $\J=\bigcap_k \ann {}^kR$ and $\ann {}^kR$ is of pure codimension $k$ (meaning that all its associated primes in each stalk are of codimension $k$). To see this it is enough to show that the germ of ${}^k R$ at $x\in X$ 
satisfies that ${}^kR=\sum_{\codim \p =k} R^\p$, where $\p$ runs over all associated prime ideals of $\J_x$. However, this can be verified following the proofs of Theorem ~\ref{prima} and Lemma ~\ref{bell}.
\end{ex}

\section{The algebraic case}\label{algebraiska}
Let $J$ be a submodule of $\C[z_1,\ldots, z_n]^{r}$ and suppose for simplicity that $\codim \C[z_1,\ldots, z_n]^{r}/J>0$, that is, $(0)\notin \ass J$.  From a free resolution of the corresponding homogeneous modules over the graded ring $\C[z_0,\ldots,z_n]$ we defined in ~\cite{AW} a residue current on $\mathbb P^n$ whose restriction $R$ to $\C_z^n$ satisfies that $\varphi\in \C[z_1,\ldots, z_n]^{r}$ is in $J$ if and only if $R\varphi=0$. Propositions ~\ref{annanna} and ~\ref{modulanna} hold with the same proof if $Z$ is an irreducible algebraic variety in $\C^n$ and $T$ is a current on $\C^n$ of finite order (in particular if it has an extension to $\mathbb P^n$). If we define the currents $R^\p$ for $\p\in\ass J$ as in the local case, the proofs of Lemma ~\ref{bell} and Theorem ~\ref{prima} go through and we obtain the following version of Theorem ~\ref{prima}.

\begin{thm}\label{polynomprima}
Suppose that $J$ is a submodule of $\C[z_1,\ldots, z_n]^{r}$ such that $\C[z_1,\ldots, z_n]^{r}/J$ has positive codimension and let $R$ be a residue current associated with $J$ as above.
Then for each $\p\in\ass J$, $R^\p$ has support on $V(\p)$ and has the SEP with respect to $V(\p)$, $\ann R^\p\subset \C[z_1,\ldots, z_n]^{r}$ is $\p$-primary, 
\begin{equation*}
R=\sum_{\p\in\ass J} R^{\p},
\end{equation*} 
and
\begin{equation*}
J=\ann R=\bigcap_{\p\in\ass J} \ann R^{\p}
\end{equation*}
yields a minimal primary decomposition of $J$.
\end{thm}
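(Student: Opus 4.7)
The plan is to mirror the proof of Theorem \ref{prima}, replacing the local ring $\O_x$ by $\C[z_1,\ldots,z_n]$ and $\O_x^{\oplus r_0}$ by $\C[z_1,\ldots,z_n]^r$, and using throughout that $R$ extends to a pseudomeromorphic current on $\mathbb{P}^n$, so every cut-off and restriction considered below is well defined and of finite order on $\C^n$. For each $\p\in\ass J$ set
\[
R^\p=R\,\1_{V(\p)\setminus\bigcup_{\q\supsetneq\p}V(\q)},
\]
which is meaningful since the sets $V(\p)$ and $V(\q)$ are algebraic, hence constructible in every chart of $\mathbb{P}^n$, and the restriction mapping from Theorem \ref{zariski} can therefore be applied globally.

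First I would prove the algebraic analogue of Lemma \ref{bell}: $\ann R^\p=\ann R^\p_q$ (with $q=\codim\p$) and $R^\p\1_W=0$ whenever $\codim W>q$. The argument is identical to the analytic one, with $Z_k$ now the algebraic subvariety of $\C^n$ where $F_k$ drops rank; the input needed is that every component of $Z_{q+\ell}$ of codimension exactly $q+\ell$ defines an associated prime of $J$, which is Corollary 20.14 in \cite{E} and is purely algebraic. The relation $R_{k+1}=\alpha_k R_k$ outside $Z_{k+1}$ and the inductive use of (i) in Theorem \ref{zariski} then run unchanged. From this the SEP of $R^\p$ with respect to $V(\p)$ follows, exactly as in the proof of Theorem \ref{prima}.

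Next I would check that $\ann R^\p$ is $\p$-primary. The codimension-$q$ component $R^\p_q$ has support in the irreducible variety $V(\p)$, so by the finite-order/algebraic version of Proposition \ref{modulanna} (which the authors point out holds by the same proof, once $T$ is known to be of finite order — guaranteed here by the extension to $\mathbb{P}^n$) the module $\ann R^\p_q\subset\C[z_1,\ldots,z_n]^r$ is $\p$-primary. Combined with $\ann R^\p=\ann R^\p_q$ this gives the primarity statement.

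Finally I would deduce the decomposition $R=\sum_{\p\in\ass J}R^\p$ by the same telescoping used in the proof of Theorem \ref{prima}: pair up associated primes of a common codimension, use Lemma \ref{bell} (algebraic version) to replace $R^\p$ by $R\1_{V(\p)\setminus V(\q)}\cdots$, and collapse the sum stratum by stratum until one reaches $R\1_{V(J)}=R$, where the last equality uses $\supp R\subset V(J)=\bigcup_{\p\in\ass J}V(\p)$. The identity $\ann R=\bigcap_\p \ann R^\p$ is then immediate: one inclusion is clear from $R^\p=R\1_{\cdot}$ and (i)–(ii), the other from the decomposition of $R$. Since each $\ann R^\p$ is $\p$-primary and the $\p$'s are distinct and minimal in $\ass J$, the decomposition of $J$ is automatically minimal. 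The hypothesis $(0)\notin\ass J$ removes the $\ker F_0$ contribution of Example \ref{kung}, so no separate $R^{(0)}$ term is needed.

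The main obstacle I anticipate is purely foundational rather than computational: verifying that the constructions of Sections \ref{normaltyp}–\ref{havet} — in particular the restriction operator $\1_W$ to a constructible set and Proposition \ref{annanna}/\ref{modulanna} — really do survive in the algebraic setting on $\C^n$. Because $R$ admits an extension to $\mathbb{P}^n$ it is pseudomeromorphic on a compact manifold and has finite order, which is exactly the input needed so that multiplication by polynomials on $\C^n$ agrees with multiplication by rational forms on $\mathbb{P}^n$ and the analytic continuations defining $\1_W$ behave well globally. Once that is established, every step above reduces to the corresponding step in Theorem \ref{prima}.
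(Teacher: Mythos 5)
Your proposal is correct and follows the paper's own approach exactly: the paper proves Theorem~\ref{polynomprima} precisely by observing that Propositions~\ref{annanna} and~\ref{modulanna} hold for irreducible algebraic varieties in $\C^n$ and currents of finite order (guaranteed here by the extension to $\mathbb{P}^n$), so that the proofs of Lemma~\ref{bell} and Theorem~\ref{prima} carry over verbatim. You merely spell out this translation in more detail, including the correct remark that the hypothesis $(0)\notin\ass J$ eliminates the $\ker F_0$ term; the route is identical to the paper's.
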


In ~\cite{AW} the residue currents for polynomial modules were used to obtain the following version of the Ehrenpreis-Palamodov fundamental principle: any smooth solution to the system of equations 
\begin{equation}\label{diff}
\eta (i \partial/\partial t) \cdot \xi(t)=0, \eta\in J\subset\C[z_1,\ldots, z_n]^{r}
\end{equation}
on a smoothly bounded convex set in $\R^n$ can be written 
\begin{equation*}\label{losning}
\xi(t)=\int_{\C^n}R^T(\zeta) A(\zeta) e^{-i\langle t,\zeta\rangle},
\end{equation*}
for an appropriate explicitly given matrix of smooth functions $A$. Here $R^T$ is (the transpose of) the residue current associated with $J$ as above. Conversely, any $\xi(t)$ given in this way is a homogeneous solution since $J=\ann R$. 
Now, for each $\p\in\ass J$ let 
\begin{equation*}\label{losningsdel}
\xi^\p(t)=\int_{\C^n}(R^\p)^T(\zeta) A(\zeta) e^{-i\langle t,\zeta\rangle},
\end{equation*}
where $R^\p$ is defined above. 
Then by Theorem ~\ref{polynomprima} $\xi=\sum \xi^\p$. Moreover $\xi^\p$ satisfies $\eta (i\partial / \partial t) \cdot \xi^\p=0$ for each $\eta\in\ann R^\p$. Hence we get a decomposition of the space of solutions to \eqref{diff} with respect to $\ass J$. 

\smallskip

\textbf{Acknowledgement:} We would like to thank Ezra Miller for valuable discussions. Thanks also to the referee for pointing out some obscurities and for many helpful suggestions.

\def\listing#1#2#3{{\sc #1}:\ {\it #2},\ #3.}

\end{document}